\documentclass[12pt,a4paper]{article}

\usepackage{latexsym}
\usepackage{amsmath}
\usepackage{amssymb}
\usepackage{arydshln}

\usepackage{cite}
\usepackage{stmaryrd}
\usepackage{enumerate}

\usepackage{hyperref}

\usepackage{color}
\usepackage{lineno}
\usepackage{graphicx}
\usepackage{ae}
\usepackage{amsmath}
\usepackage{amssymb}
\usepackage{latexsym}
\usepackage{url}
\usepackage{epsfig}
\usepackage{mathrsfs}
\usepackage{amsfonts}
\usepackage{amsthm}

\usepackage{float}
\usepackage{subfig}
\usepackage{tikz}
\usetikzlibrary{positioning}

\newtheorem{theorem}{Theorem}[section]

\newtheorem{lemma}[theorem]{Lemma}

\newtheorem{cor}[theorem]{Corollary}

\newtheorem{example}{Example}
\theoremstyle{definition}
\newtheorem{definition}{Definition}

\numberwithin{equation}{section}
\allowdisplaybreaks

\def\qed{\hfill$\Box$\vspace{12pt}}

\long\def\delete#1{}

\usetikzlibrary{decorations.markings}

\tikzstyle{vertex}=[circle, draw, inner sep=0pt, minimum size=6pt]
\tikzstyle{directed}=[postaction={decorate,
	decoration={markings,mark=at position 0.3 with {\arrow{stealth}}}
}]

\usepackage{xcolor}
\usepackage[normalem]{ulem}

\begin{document}

\title {Fractional revival on oriented Cayley and semi-Cayley graphs over abelian groups}

\author{Ming Jiang$^{a,b}$,~Xiaogang Liu$^{a,b,}$\thanks{Supported by the National Natural Science Foundation of China (No. 12371358).}~$^,$\thanks{ Corresponding author. Email addresses: mjiang@mail.nwpu.edu.cn, xiaogliu@nwpu.edu.cn,
jingwang\_math@snnu.edu.cn}~,~Jing Wang$^{c}$
\\[2mm]
{\small $^a$School of Mathematics and Statistics,}\\[-0.8ex]
{\small Northwestern Polytechnical University, Xi'an, Shaanxi 710072, P.R.~China}\\
{\small $^b$Xi'an-Budapest Joint Research Center for Combinatorics,}\\[-0.8ex]
{\small Northwestern Polytechnical University, Xi'an, Shaanxi 710129, P.R. China}\\
{\small $^c$School of Mathematics and Statistics,}\\[-0.8ex]
{\small Shaanxi Normal University, Xi'an, Shaanxi 710119, P.R. China} 
}
\date{}

\openup 0.5\jot
\maketitle

\begin{abstract}
Fractional revival (FR), a generalization of perfect state transfer (PST), is a significant phenomenon in quantum state transfer that allows quantum information to be transmitted between two qubits with a certain probability. The existence of FR has been extensively studied on many classes of graphs. However, oriented graphs have not yet been investigated. In this paper, we investigate the existence of proper FR on oriented graphs. We first establish necessary and sufficient conditions for oriented graphs to admit proper FR between strongly cospectral vertices. Furthermore, we prove that oriented Cayley graphs over abelian groups do not admit proper FR, and we subsequently characterize the conditions under which oriented semi-Cayley graphs over abelian groups admit proper FR.
\smallskip

\emph{Keywords:} Fractional revival; strongly cospectral; Cayley graph; semi-Cayley graph; abelian group

\emph{Mathematics Subject Classification (2010):} 05C50, 81P68

\end{abstract}

\section{Introduction}

Let $\Gamma$ be an undirected graph with vertex set $V(\Gamma)$ and edge set $E(\Gamma) \subseteq V(\Gamma) \times V(\Gamma) $. The \emph{adjacency matrix} of $\Gamma$, denoted by $A_{\Gamma}$, is an $n \times n$ matrix whose $(u,v)$-entry is $1$ if $(u,v) \in E(\Gamma)$, and $0$ otherwise. Note that $(u,v) \in E(\Gamma)$ if and only if $(v,u) \in E(\Gamma)$; hence $A_{\Gamma}$ is Hermitian. The transition matrix of the \emph{continuous-time quantum walk} on $\Gamma$ with Hamiltonian $A_{\Gamma}$ is given by
\begin{equation}\label{transM11}
U(t) = \exp(-\mathrm{i} A_{\Gamma} t) = \sum_{k=0}^{\infty} \frac{(-\mathrm{i})^k A_{\Gamma}^k t^k}{k!}, \qquad t \in \mathbb{R}, \ \mathrm{i} = \sqrt{-1}.
\end{equation}
As one of the universal models of quantum computing, continuous-time quantum walks find broad applications in quantum communication, network analysis and quantum algorithms, and have attracted substantial attention from researchers.


In 2003, Bose first investigated quantum state transfer on quantum spin chains via continuous-time quantum walks and demonstrated high-fidelity quantum state transfer~\cite{Bose2003}. One year later, Christandl et al. generalized this idea to arbitrary graphs: they abstracted quantum network models as graphs and utilized continuous-time quantum walks on graphs to characterize the time evolution of quantum systems~\cite{Christandl2004}. These pioneering works motivated subsequent research on \emph{perfect state transfer} (PST for short) on diverse families of graphs, such as trees~\cite{God2012}, Cayley graphs~\cite{Basi2013, Basi2009, Cao2020, Cao2021, Cheung2011, Tan2019}, distance-regular graphs~\cite{Coutinho2015} and graph operations~\cite{Pal2016, Ack2017, Ang}.

We now give the definition of PST. Given two distinct vertices $u$ and $v$, let $\mathbf{e}_{u}$ and $\mathbf{e}_{v}$ denote the standard basis vectors indexed by $u$ and $v$, respectively. We say that $\Gamma$ admits PST between $u$ and $v$ if there exists a time $t$ and a complex number $\gamma$ with $|\gamma| = 1$ such that
$$
U(t)\mathbf{e}_{u}=\gamma \mathbf{e}_{v}.
$$
In 2010, Kay proved that if an undirected graph admits PST between $u$ and $v$ and also between $u$ and $w$, then $w=v$ \cite{Kay}.

In contrast to the undirected case, oriented graphs exhibit much richer spectral and quantum transport behavior.
Given a graph $\Gamma$ with vertex set $V(\Gamma)$ and edge set $E(\Gamma)$, an edge $(u,v)$ is called \emph{directed} if $(u,v)\in E(\Gamma)$ but $(v,u)\notin E(\Gamma)$. A graph $\Gamma$ is called \emph{oriented} when all its edges are directed. The adjacency matrix of an oriented graph $\Gamma$, denoted by $A_{\Gamma}=[a_{uv}]$, is defined  entrywise by
$$
a_{uv}=\begin{cases}
 1, & \text{ if } (u,v)\in E(\Gamma), \\
-1, & \text{ if } (v,u)\in E(\Gamma), \\
 0, & \text{ otherwise. }\\
\end{cases}
$$
where $a_{uv}$ denotes the entry in the $u$-th row and $v$-th column of $A_{\Gamma}$. Since $A_{\Gamma}$ is real skew-symmetric, we take $\mathrm{i}A_{\Gamma}$ as the Hamiltonian for the continuous quantum walk on $\Gamma$. The \emph{transition matrix} of the continuous quantum walk on $\Gamma$ with respect to $\mathrm{i}A_{\Gamma}$ is given by
\begin{equation}\label{transM}
U(t)=\exp(-\mathrm{i}(\mathrm{i}A_{\Gamma}t))=\exp(A_{\Gamma}t),~ t \in \mathbb{R},~\mathrm{i}=\sqrt{-1}.
\end{equation}
Clearly, $U(t)$ is always a real unitary matrix.

Recall that in undirected graphs, a vertex can only admit PST with at most one other vertex; this restriction no longer holds for oriented graphs, where multiple pairwise PST among a set of vertices becomes possible. In 2014, Cameron et al. introduced the notion of \emph{universal perfect state transfer} (short for UPST),  meaning PST exists between every pair of vertices in the whole graph. They further conjectured that the oriented 3-cycle and the  single oriented edge are the only graphs admitting UPST~\cite{Came2014}. This conjecture inspired a stream of follow-up research. In 2017, Connelly gave new characterizations of graphs with UPST~\cite{Conn2017}. In 2020, Godsil and Lato extended PST theory from undirected graphs to oriented graphs, derived necessary and sufficient conditions for PST on oriented graphs, and generalized UPST by proposing \emph{multiple perfect state transfer} (MPST for short): a graph is said to admit MPST if there exists a  vertex subset $S$ with $\left|S\right|\ge 3$ such that PST occurs between any two distinct  vertices in $S$~\cite{God2020}. In 2023, Acuaviva verified Cameron's 2014 conjecture and also constructed oriented graphs admitting MPST with $\left|S\right|=4$~\cite{Acuaviva2023}. Most recently in 2024, Chen et al. constructed families of weighted oriented graphs with UPST via bipartite walks~\cite{Chen2024}. In the same year, Song investigated the existence of PST on integral oriented circulant graphs~\cite{song}.

Although PST on oriented graphs has been systematically investigated, \emph{fractional revival} (FR for short) --- another key quantum transport phenomenon generalized from PST --- remains unexplored over oriented graphs. A series of fundamental results concerning FR on undirected graphs have been established, which both motivate our work and supply essential technical preliminaries.

As a natural generalization of PST, FR was first observed and analyzed in quantum optics and spin chain systems. In 2015, Rohith et al. introduced FR as a generalization of PST in quantum state transmission~\cite{Rohith}. Subsequently, Genest et al. presented a systematic study of FR at two sites in XX quantum spin chains~\cite{Genest2016}. In the context of continuous quantum walks on graphs, Chan et al. provided the first systematic treatment of FR in 2019, characterizing the necessary and sufficient conditions for an undirected graph to admit FR in terms of its spectral decomposition~\cite{Chan2019}. This seminal work stimulated a series of subsequent studies. In 2020, Chan et al. investigated the existence of FR in graphs whose adjacency matrices belong to the Bose-Mesner algebra of association schemes~\cite{Chan2020}. In 2021, Chan  et al. developed FR theory by using the Laplacian matrix as the Hamiltonian for the quantum walk~\cite{Chan2021}. In 2023, Monterde provided a characterization of FR between twin vertices in weighted graphs~\cite{Mon2023}. Most recently, Wang  et al.  gave a necessary and sufficient condition for Cayley graphs over finite abelian groups to have FR~\cite{WangJ2024} and further demonstrated the existence of FR on semi-Cayley graphs over finite abelian groups~\cite{WangJ2025}.

Against this background, the present paper initiates the first investigation of FR on oriented graphs and establishes a spectral characterization of FR in this setting. Given two distinct vertices $u$ and $v$ in oriented graph $\Gamma$, we say that $\Gamma$ admits $(\alpha, \beta)$-FR with $\beta\neq 0$ from $u$ to $v$ if there exists a time $t$ such that
\begin{equation}\label{FR1}
U(t)\mathbf{e}_{u}=\alpha \mathbf{e}_{u} +\beta \mathbf{e}_{v},
\end{equation}
where $\alpha, \beta\in \mathbb{R} $ and satisfy
$$
\alpha ^2 +\beta ^2=1.
$$
When $\alpha\beta\neq0$, the FR is said to be \emph{proper}.

Our main contributions are outlined below. In Section \ref{Sec-222}, we establish necessary and sufficient conditions for oriented graphs to admit proper FR between strongly cospectral vertices. In Section \ref{Sec-333}, we prove that oriented Cayley graphs over abelian groups do not admit proper FR. In Section \ref{Sec-444}, we derive necessary and sufficient conditions for oriented semi-Cayley graphs over abelian groups to admit proper FR.

\section{Preliminaries}\label{Sec-222}

For an oriented graph $\Gamma$ of order $n$, let $\lambda_1,\dots,\lambda_n$ be the eigenvalues of  $A_\Gamma$ with corresponding orthonormal eigenvectors $\mathbf{x}_1,\dots,\mathbf{x}_n$. For each $r\in \{1,\dots,n\}$, define the eigenprojector associated with $\lambda_r$ by
$$
E_{\lambda_{r}} = \mathbf{x}_r \left(\mathbf{x}_r\right)^H,
$$
where $\ast^H$ denotes the conjugate transpose of $\ast$. Notice that $\sum_{r=1}^{n}E_{\lambda_{r}}=I$, where $I$ is the identity matrix of order $n$. The spectral decomposition of $A_\Gamma$ takes the form
\begin{equation}\label{decom1}
A_\Gamma=A_\Gamma\sum_{r=1}^{n}E_{\lambda_{r}}=\sum_{r=1}^{n}\lambda_{r}E_{\lambda_{r}}.
\end{equation}
Note that $E_{\lambda_{r}}E_{\lambda_{r}}=E_{\lambda_{r}}$ and $E_{\lambda_{r}}E_{\lambda_{s}}=0$ for each $r\neq s$. It follows from \eqref{transM} and \eqref{decom1} that
\begin{equation}\label{Udecompose}
U(t)=\sum_{k\ge 0}\frac{A_{\Gamma}^{k}t^{k}}{k!}=\sum_{r=1}^{n}\exp(t\lambda_{r})E_{\lambda_r}.
\end{equation}

Given two distinct vertices $u$ and $v$ in an oriented graph $\Gamma$, we say that $u$ and $v$ are \emph{cospectral} if for each $r\in \{1,\dots,n\}$,
$$
(E_{\lambda_{r}})_{u,u}=(E_{\lambda_{r}})_{v,v},
$$
where $(E_{\lambda_{r}})_{u,u}$ denotes the $(u,u)$-entry of $E_{\lambda_{r}}$. This definition coincides with the one for undirected graphs. Recall that $A_\Gamma$ is skew symmetric. Hence its eigenvalues are purely imaginary or zero, and its eigenvectors are generally complex. In this oriented setting, vertices $u$ and $v$ are said to be \emph{strongly cospectral} if for every $r\in \{1,\dots,n\}$, there exists a phase factor $e^{\mathrm{i}\pi q_r(u,v)}$ such that
\begin{equation}\label{Scospectral}
E_{\lambda_{r}}\mathbf{e}_{u}=e^{\mathrm{i}\pi q_r(u,v)} E_{\lambda_{r}}\mathbf{e}_{v},
\end{equation}
with the condition that these phase factors depend only on the eigenvalue:
\begin{equation}\label{laml=lam2}
\lambda_i=\lambda_j ~\Rightarrow ~ e^{\mathrm{i}\pi q_i(u,v)}=e^{\mathrm{i}\pi q_j(u,v)}.
\end{equation}
For each vertex $u$, let $\phi_u$ denote the set of eigenvalues $\lambda_{r}$ for which $E_{\lambda_{r}}\mathbf{e}_{u}\neq \mathbf{0}$;  we refer to $\phi_u$ as the \emph{eigenvalue support} of $u$.

\begin{lemma}\label{Cospectral-111}
Suppose that two distinct vertices $u$ and $v$ are strongly cospectral in an oriented graph $\Gamma$. Then $u$ and $v$ are cospectral in $\Gamma$.
\end{lemma}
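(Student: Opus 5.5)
The plan is to compute the diagonal entries of each eigenprojector directly from the strong cospectrality relation \eqref{Scospectral}. Three facts will be used: $E_{\lambda_r}$ is Hermitian, $E_{\lambda_r}$ is idempotent, and $|e^{\mathrm{i}\pi q_r(u,v)}|=1$. Fix $r\in\{1,\dots,n\}$ and write $\gamma_r=e^{\mathrm{i}\pi q_r(u,v)}$, so that $E_{\lambda_r}\mathbf{e}_u=\gamma_r E_{\lambda_r}\mathbf{e}_v$.

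First express each diagonal entry as a squared norm. Since $E_{\lambda_r}=\mathbf{x}_r\mathbf{x}_r^H$, we have $E_{\lambda_r}^H=E_{\lambda_r}$ and $E_{\lambda_r}^2=E_{\lambda_r}$. Hence
$$(E_{\lambda_r})_{u,u}=\mathbf{e}_u^{T}E_{\lambda_r}\mathbf{e}_u=\mathbf{e}_u^{T}E_{\lambda_r}^H E_{\lambda_r}\mathbf{e}_u=\|E_{\lambda_r}\mathbf{e}_u\|^2,$$
and similarly $(E_{\lambda_r})_{v,v}=\|E_{\lambda_r}\mathbf{e}_v\|^2$. Here $\mathbf{e}_u$ is real, so $\mathbf{e}_u^H=\mathbf{e}_u^T$.

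Next take norms in \eqref{Scospectral}. This gives $\|E_{\lambda_r}\mathbf{e}_u\|^2=|\gamma_r|^2\|E_{\lambda_r}\mathbf{e}_v\|^2=\|E_{\lambda_r}\mathbf{e}_v\|^2$. Therefore $(E_{\lambda_r})_{u,u}=(E_{\lambda_r})_{v,v}$ for every $r$, which is exactly the definition of $u$ and $v$ being cospectral.

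The only point needing care is that the projectors here are the rank-one $\mathbf{x}_r\mathbf{x}_r^H$ rather than projectors onto whole eigenspaces. The argument above never uses the eigenspace structure, so condition \eqref{laml=lam2} is not needed, and I expect no real obstacle. Alternatively, one can read off the entries directly: $(E_{\lambda_r})_{u,u}=|(\mathbf{x}_r)_u|^2$, and comparing the $u$- and $v$-coordinates of \eqref{Scospectral}, which are proportional to $\mathbf{x}_r$, gives $|(\mathbf{x}_r)_u|=|(\mathbf{x}_r)_v|$.
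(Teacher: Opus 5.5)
Your proof is correct and is essentially the paper's argument. Both write $(E_{\lambda_r})_{u,u}=\|E_{\lambda_r}\mathbf{e}_u\|^2$, using that $E_{\lambda_r}$ is Hermitian and idempotent, and then take squared norms in the strong cospectrality relation with $|e^{\mathrm{i}\pi q_r(u,v)}|=1$.
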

\begin{proof}
If $u$ and $v$ are strongly cospectral, then for each $r=1,\dots,n$, we have
$$
E_{\lambda_{r}}\mathbf e_{u}=e^{\mathrm{i}\pi q_r(u,v)} E_{\lambda_{r}}\mathbf e_{v}.
$$
It follows that
$$
(E_{\lambda_{r}}\mathbf e_{u})^{H}E_{\lambda_{r}}\mathbf e_{u}=(e^{\mathrm{i}\pi q_r(u,v)} E_{\lambda_{r}}\mathbf e_{v})^{H}e^{\mathrm{i}\pi q_r(u,v)} E_{\lambda_{r}}\mathbf e_{v}.
$$
Since eigenprojectors are idempotent and Hermitian, we obtain that $(E_{\lambda_{r}})_{u,u}=(E_{\lambda_{r}})_{v,v}$ for each $r=1,\dots,n$, that is, $u$ and $v$ are cospectral in $\Gamma$.
\qed
\end{proof}

\begin{lemma}\label{Cospectral}
Suppose that two distinct vertices $u$ and $v$ are (strongly) cospectral in an oriented graph $\Gamma$. Then $U(t)_{u,u}=U(t)_{v,v}$ at any time $t$, where $U(t)_{u,u}$ denotes the $(u,u)$-entry of the transition matrix $U(t)$.
\end{lemma}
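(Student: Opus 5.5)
The plan is to read the diagonal entries of $U(t)$ directly off the spectral decomposition \eqref{Udecompose}. Taking the $(u,u)$-entry of
$$
U(t)=\sum_{r=1}^{n}\exp(t\lambda_r)E_{\lambda_r}
$$
gives
$$
U(t)_{u,u}=\mathbf e_u^{T}U(t)\mathbf e_u=\sum_{r=1}^{n}\exp(t\lambda_r)\,(E_{\lambda_r})_{u,u},
$$
and the same expression holds with $u$ replaced by $v$. The two sums therefore differ only through the coefficients $(E_{\lambda_r})_{u,u}$ and $(E_{\lambda_r})_{v,v}$. The exponential weights $\exp(t\lambda_r)$ are the same for both vertices and do not depend on $u$ or $v$.

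First, I treat the hypothesis. If $u$ and $v$ are strongly cospectral, Lemma~\ref{Cospectral-111} shows that they are cospectral, so in either reading of ``(strongly) cospectral'' we may assume $(E_{\lambda_r})_{u,u}=(E_{\lambda_r})_{v,v}$ for every $r\in\{1,\dots,n\}$. Substituting this into the two sums term by term gives $U(t)_{u,u}=U(t)_{v,v}$ for every real $t$.

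The only point that needs care is that the paper defines eigenprojectors per eigenvector, $E_{\lambda_r}=\mathbf x_r\mathbf x_r^H$, rather than per distinct eigenvalue. The definition of cospectrality is stated for each index $r$, though, and \eqref{Udecompose} is also a sum over indices $r$. So the comparison is index by index, and no regrouping by eigenvalue is needed. I expect no genuine obstacle; the argument is a direct substitution once Lemma~\ref{Cospectral-111} has reduced the strongly cospectral case to the cospectral one.
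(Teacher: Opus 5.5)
Your proposal is correct and takes the same approach as the paper: reduce the strongly cospectral case to the cospectral one via Lemma~\ref{Cospectral-111}, then compare the $(u,u)$- and $(v,v)$-entries of \eqref{Udecompose} term by term. Your observation that the comparison is index by index, since $E_{\lambda_r}=\mathbf x_r\mathbf x_r^H$ is defined per eigenvector, makes explicit a point the paper leaves implicit.
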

\begin{proof}
If $u$ and $v$ are strongly cospectral, by Lemma \ref{Cospectral-111} they are cospectral. It therefore suffices to consider the cospectral case.

If $u$ and $v$ are cospectral, then $(E_{\lambda_{r}})_{u,u}=(E_{\lambda_{r}})_{v,v}$ for each $r=1,\dots,n$. By \eqref{Udecompose}, we have $U(t)_{u,u}=U(t)_{v,v}$ at any time $t$.
\qed
\end{proof}

\begin{theorem}\label{FR=PST}
Suppose that two distinct vertices $u$ and $v$ are (strongly) cospectral in an oriented graph $\Gamma$. If $(\alpha, \beta)$-FR with $\alpha, \beta\neq 0$ occurs at time $t$ from $u$ to $v$, then $(\alpha, -\beta)$-FR occurs from $v$ to $u$ at the same time $t$.
\end{theorem}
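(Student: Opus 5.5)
The plan is to exploit three facts: $U(t)$ is a real orthogonal matrix; $A_\Gamma$ is skew-symmetric, so $U(t)^T=U(-t)$; and Lemma~\ref{Cospectral} gives equal diagonal entries at $u$ and $v$. Write the hypothesis as $U(t)\mathbf{e}_u=\alpha\mathbf{e}_u+\beta\mathbf{e}_v$ with $\alpha,\beta$ real, $\alpha^2+\beta^2=1$ and $\alpha\beta\neq 0$. The goal is to show $U(t)\mathbf{e}_v=-\beta\mathbf{e}_u+\alpha\mathbf{e}_v$, which is exactly $(\alpha,-\beta)$-FR from $v$ to $u$.

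\textbf{Step 1 (read off column $u$).} From the hypothesis, $U(t)_{u,u}=\alpha$ and $U(t)_{v,u}=\beta$, and all other entries of column $u$ vanish. By Lemma~\ref{Cospectral}, $U(t)_{v,v}=U(t)_{u,u}=\alpha$.

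\textbf{Step 2 (use orthonormality of the columns).} Since $U(t)$ is real unitary, hence orthogonal, column $v$ is a unit vector orthogonal to column $u$. Column $v$ has $v$-entry $\alpha$. Its unit norm gives
$$\sum_{w\neq v}U(t)_{w,v}^2=1-\alpha^2=\beta^2.$$
Orthogonality to column $u$ gives $U(t)_{u,v}\,\alpha+\alpha\,\beta=0$. Because $\alpha\neq 0$, this yields $U(t)_{u,v}=-\beta$. Then $U(t)_{u,v}^2=\beta^2$ already uses up the entire remaining norm, so $U(t)_{w,v}=0$ for all $w\notin\{u,v\}$. Hence $U(t)\mathbf{e}_v=-\beta\mathbf{e}_u+\alpha\mathbf{e}_v$, as required. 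The sign $-\beta$ also matches the skew-symmetric intuition that $U(t)^T=U(-t)$.

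\textbf{Main obstacle.} The only delicate point is the diagonal equality $U(t)_{v,v}=\alpha$. This needs cospectrality, which in the strongly cospectral case comes from Lemma~\ref{Cospectral-111}, and it holds because $U(t)=\sum_r \exp(t\lambda_r)E_{\lambda_r}$. The hypothesis $\alpha\neq0$ is used essentially to divide in the orthogonality relation; $\beta\neq0$ is only needed so that the conclusion is genuinely an FR.
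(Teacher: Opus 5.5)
Your proof is correct and follows essentially the same route as the paper. It uses $U(t)_{v,v}=U(t)_{u,u}=\alpha$ from Lemma~\ref{Cospectral}, then orthogonality of columns $u$ and $v$ to get $\alpha U(t)_{u,v}+\beta\alpha=0$ and hence $U(t)_{u,v}=-\beta$, and finally the unit norm of column $v$ to force all remaining entries of that column to vanish.
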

\begin{proof}
Suppose that $(\alpha, \beta)$-FR occurs from $u$ to $v$ at time $t$. Then
\begin{equation}\label{FRab}
U(t)\mathbf e_{u}=\alpha\mathbf e_{u}+ \beta\mathbf e_{v},
\end{equation}
with $\alpha ^2 +\beta ^2=1 $. By Lemma \ref{Cospectral} and (\ref{FRab}), we get $U(t)_{v,v}=U(t)_{u,u}=\alpha$.

Note that $U(t)$ is unitary. It follows that
$$
(U(t)\mathbf e_{u})^{H}(U(t)\mathbf e_{v})=0.
$$
Substituting \eqref{FRab} into the above equation, we have
\begin{equation*}
(\alpha\mathbf e_{u}+ \beta\mathbf e_{v})^{H}(U(t)\mathbf e_{v})=\alpha U(t)_{u,v}+ \beta U(t)_{v,v}=0.
\end{equation*}
Thus, $U(t)_{u,v}=-\beta$. Moreover, since $U(t)$ is unitary,
$$
1=\|U(t)e_v\|^2\geq |U(t)_{u,v}|^2+|U(t)_{v,v}|^2=\beta^2+\alpha^2=1.
$$
Therefore, equality holds and all the remaining entries in the $v$-th column of
$U(t)$ are zero. Consequently,
$$
U(t)\mathbf e_{v}=\alpha \mathbf e_{v}-\beta \mathbf e_{u}.
$$

Therefore, we conclude that $(\alpha,-\beta)$-FR occurs at time $t$ from $v$ to $u$.
\qed
\end{proof}

In the following, we provide a characterization of oriented graphs admitting FR between strongly cospectral vertices.

\begin{theorem}\label{FRNS}
Let $u$ and $v$ be strongly cospectral vertices in an oriented graph $\Gamma$. Then $\Gamma$ admits $(\alpha, \beta)$-FR with $\alpha, \beta\neq0$ from $u$ to $v$ at time $t$ if and only if for each $\lambda_{r}\in \phi_u$,
$$
\mathrm{Re} \left(e^{\lambda_{r} t}\right)=\alpha,~ ~\mathrm{Im} \left(e^{\lambda_{r} t}\right)=\pm \beta,
$$
and
\begin{equation}\label{ipm}
e^{\mathrm{i}\pi q_r(u,v)} = -\frac{\mathrm{Im}\left(e^{\lambda_r t}\right)}{\beta}\,\mathrm{i}.
\end{equation}
\end{theorem}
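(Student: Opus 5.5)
The plan is to project the FR equation \eqref{FR1} onto each eigenspace and read off conditions eigenvalue by eigenvalue, in both directions.

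\emph{Necessity.} Assume $U(t)\mathbf e_u=\alpha\mathbf e_u+\beta\mathbf e_v$ with $\alpha\beta\neq 0$. Multiply both sides by $E_{\lambda_r}$. By \eqref{Udecompose}, $E_{\lambda_r}U(t)=e^{\lambda_r t}E_{\lambda_r}$. Strong cospectrality \eqref{Scospectral} gives $E_{\lambda_r}\mathbf e_v=e^{-\mathrm i\pi q_r(u,v)}E_{\lambda_r}\mathbf e_u$. Together these yield
\[
e^{\lambda_r t}E_{\lambda_r}\mathbf e_u=\bigl(\alpha+\beta e^{-\mathrm i\pi q_r(u,v)}\bigr)E_{\lambda_r}\mathbf e_u .
\]
For $\lambda_r\in\phi_u$ the vector $E_{\lambda_r}\mathbf e_u$ is nonzero, so we may cancel it and obtain
\[
e^{\lambda_r t}=\alpha+\beta e^{-\mathrm i\pi q_r(u,v)} .
\]
Next we use that $\lambda_r$ is purely imaginary, so $|e^{\lambda_r t}|=1$. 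Since $\alpha^2+\beta^2=1$ with $\alpha,\beta$ real and nonzero, writing $e^{-\mathrm i\pi q_r}=c+\mathrm i s$ gives $1=|\alpha+\beta c+\mathrm i\beta s|^2=1+2\alpha\beta c$. Hence $c=0$ and $e^{-\mathrm i\pi q_r}=\pm\mathrm i$. Then $e^{\lambda_r t}=\alpha\pm\mathrm i\beta$, so $\mathrm{Re}(e^{\lambda_r t})=\alpha$ and $\mathrm{Im}(e^{\lambda_r t})=\pm\beta$. The sign is tied to the phase by $e^{-\mathrm i\pi q_r}=\mathrm{Im}(e^{\lambda_r t})\,\mathrm i/\beta$. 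Inverting, since $1/\mathrm i=-\mathrm i$ and $(\mathrm{Im}/\beta)^2=1$, gives exactly \eqref{ipm}.

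One hidden point needs checking: that the real matrix $U(t)$ and the real $\alpha,\beta$ are consistent with the phases. This is automatic here because we only use the projected equations.

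\emph{Sufficiency.} Reverse the computation. Conditions \eqref{ipm} and the real/imaginary part conditions give $e^{\lambda_r t}E_{\lambda_r}\mathbf e_u=\alpha E_{\lambda_r}\mathbf e_u+\beta E_{\lambda_r}\mathbf e_v$ for each $\lambda_r\in\phi_u$. For $\lambda_r\notin\phi_u$ both sides vanish, since $E_{\lambda_r}\mathbf e_u=0$ and, by strong cospectrality, $E_{\lambda_r}\mathbf e_v=0$. Summing over $r$ and using $\sum_r E_{\lambda_r}=I$ together with \eqref{Udecompose} gives $U(t)\mathbf e_u=\alpha\mathbf e_u+\beta\mathbf e_v$.

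The remaining issue is the normalization $\alpha^2+\beta^2=1$. It follows because $|e^{\lambda_r t}|=1$ and $\phi_u$ is nonempty (since $\mathbf e_u\neq 0$), so at least one eigenvalue in $\phi_u$ forces $\alpha^2+\beta^2=1$.

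The main obstacle is handling repeated eigenvalues, where the same $\lambda$ carries several rank-one projectors. Condition \eqref{laml=lam2} ensures the phases, and hence the derived conditions, depend only on the eigenvalue, so the per-$r$ argument is consistent. I would note this explicitly. The other delicate step is the modulus argument forcing $c=0$, which is where $\alpha\beta\neq0$ is used.
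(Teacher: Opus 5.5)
Your proof is correct. The sufficiency half is essentially the paper's argument. The paper splits $\phi_u$ into $\Lambda^{\pm}_{u,v}$ before summing, while you sum the per-index identity $e^{\lambda_r t}E_{\lambda_r}\mathbf e_u=\alpha E_{\lambda_r}\mathbf e_u+\beta E_{\lambda_r}\mathbf e_v$ directly. You also make explicit two points the paper leaves tacit: $E_{\lambda_r}\mathbf e_v=0$ for $\lambda_r\notin\phi_u$, and $\alpha^2+\beta^2=1$.

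The necessity half takes a genuinely different route. The paper first invokes Theorem~\ref{FR=PST}, which rests on Lemma~\ref{Cospectral} and the column orthogonality of $U(t)$, to get the reverse relation $U(t)\mathbf e_v=\alpha\mathbf e_v-\beta\mathbf e_u$. It then projects both the forward and the reverse equations onto $E_{\lambda_r}$ and equates the two resulting expressions for the phase, $\overline{(e^{\lambda_r t}-\alpha)/\beta}=-(e^{\lambda_r t}-\alpha)/\beta$. This forces $\mathrm{Re}(e^{\lambda_r t})=\alpha$, and then $|e^{\lambda_r t}|=1$ gives $\mathrm{Im}(e^{\lambda_r t})=\pm\beta$.

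You project only the forward equation, obtaining $e^{\lambda_r t}=\alpha+\beta e^{-\mathrm i\pi q_r(u,v)}$. You then pin down the phase through the modulus identity $1=1+2\alpha\beta\,\mathrm{Re}(e^{-\mathrm i\pi q_r(u,v)})$. Your version is more self-contained: it needs neither Theorem~\ref{FR=PST} nor the cospectrality lemmas. It also shows precisely where $\alpha\neq0$ is used, since for $\alpha=0$ (PST) the same identity imposes nothing on the phase. The paper's version, in exchange, ties the result to the reverse-FR relation that it reuses later, e.g.\ in Theorem~\ref{SemiA}.

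Finally, repeated eigenvalues are not really an obstacle for you. Your argument is index-by-index throughout, and \eqref{laml=lam2} is needed only to assign the phase \eqref{ipm} to indices $r'$ with $\lambda_{r'}\in\phi_u$ but $E_{\lambda_{r'}}\mathbf e_u=0$.
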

\begin{proof}
Suppose that $(\alpha, \beta)$-FR occurs from $u$ to $v$ at time $t$. By Theorem \ref{FR=PST}, we have
\begin{equation}\label{FRabba}
\left\{\begin{matrix}
U(t)\mathbf e_{u}=\alpha\mathbf e_{u}+ \beta\mathbf e_{v},     \\[0.2cm]
U(t)\mathbf e_{v}=\alpha\mathbf e_{v}- \beta\mathbf e_{u}.
\end{matrix}\right.
\end{equation}
For each $\lambda_{r}\in \phi_u$, multiplying both sides of \eqref{FRabba} on the left by $E_{\lambda_{r}}$ yields
\begin{equation}\label{FRcosab}
\left\{\begin{matrix}
\exp(\lambda_{r}t)E_{\lambda_{r}}\mathbf e_{u}=\alpha E_{\lambda_{r}}\mathbf e_{u}+ \beta E_{\lambda_{r}}\mathbf e_{v},     \\[0.2cm]
\exp(\lambda_{r}t)E_{\lambda_{r}}\mathbf e_{v}=\alpha E_{\lambda_{r}}\mathbf e_{v}- \beta E_{\lambda_{r}}\mathbf e_{u}.
\end{matrix}\right.
\end{equation}
Since $u$ and $v$ are strongly cospectral, there exists a complex number $e^{\mathrm{i}\pi q_r(u,v)}$ such that
\begin{equation}\label{FRcosab1}
E_{\lambda_{r}}\mathbf e_{u}=e^{\mathrm{i}\pi q_r(u,v)} E_{\lambda_{r}}\mathbf e_{v}.
\end{equation}
Combining (\ref{FRcosab}) with (\ref{FRcosab1}), we have
\begin{equation}\label{FRcosab1-1-1-2}
e^{\mathrm{i}\pi q_r(u,v)} = \overline{\frac{e^{\lambda_{r}t}-\alpha}{\overline{\beta}}} = -\frac{e^{\lambda_{r}t}-\alpha}{\beta},
\end{equation}
where $\overline{\ast}$ denotes the complex conjugate of $\ast$. It follows that $\mathrm{Re} (e^{\lambda_{r} t})=\alpha$. Note that $\left|e^{\lambda_{r} t}\right|=1$ and $\alpha ^2 +\beta ^2=1$. Then $\mathrm{Im} (e^{\lambda_{r} t})=\pm \beta$. By \eqref{FRcosab1-1-1-2}, we obtain
$$
e^{\mathrm{i}\pi q_r(u,v)}=-\frac{\mathrm{Im}\left(e^{\lambda_r t}\right)}{\beta}\,\mathrm{i}.
$$

Conversely, suppose that for every $\lambda_r\in\phi_u$,
$$
\operatorname{Re}(e^{\lambda_r t})=\alpha,\quad \operatorname{Im}(e^{\lambda_r t})=\pm\beta,\quad\text{and}\quad E_{\lambda_r}\mathbf e_u = \mp\mathrm{i}\,E_{\lambda_r}\mathbf e_v.
$$
Partition $\phi_u$ into
$$
\Lambda_{u,v}^+ = \{\lambda_r \in \phi_u: E_{\lambda_r}\mathbf e_u = +\mathrm{i}E_{\lambda_r}\mathbf e_v\},\qquad
\Lambda_{u,v}^- = \{\lambda_r  \in \phi_u: E_{\lambda_r}\mathbf e_u = -\mathrm{i}E_{\lambda_r}\mathbf e_v\}.
$$
 Then
$$
e^{\lambda_r t} =\left\{
\begin{array}{cc}
  \alpha-\mathrm{i}\beta, & \text{if $\lambda_r\in\Lambda_{u,v}^+$}, \\[0.2cm]
  \alpha+\mathrm{i}\beta, &  \text{if $\lambda_r\in\Lambda_{u,v}^-$}.
\end{array}
\right.
$$
Note that $\sum_{r=1}^{n}E_{\lambda_r}=I$ and $E_{\lambda_r}\mathbf e_u=0$ for $\lambda_r\notin\phi_u$. By \eqref{Udecompose}, we have
\begin{equation*}
\begin{aligned}
U(t)\mathbf{e}_{u}
&=\sum_{r=1}^{n}\exp(\lambda_{r}t)E_{\lambda_{r}}\mathbf{e}_{u}\\
&=\sum_{\lambda_{r}\in \Lambda_{u,v}^+}\exp(\lambda_{r}t)E_{\lambda_{r}}\mathbf{e}_{u}+\sum_{\lambda_{r}\in \Lambda_{u,v}^-}\exp(\lambda_{r}t)E_{\lambda_{r}}\mathbf{e}_{u}\\
&=\sum_{\lambda_{r}\in \Lambda_{u,v}^+}(\alpha-\mathrm{i}\beta)E_{\lambda_{r}}\mathbf{e}_{u}+\sum_{\lambda_{r}\in \Lambda_{u,v}^-}(\alpha+\mathrm{i}\beta)E_{\lambda_{r}}\mathbf{e}_{u}\\
&= \alpha\sum_{\lambda_r\in\phi_u}E_{\lambda_r}\mathbf e_u + \mathrm{i}\beta\left(-\sum_{\lambda_r\in\Lambda_{u,v}^+}E_{\lambda_r}\mathbf e_u + \sum_{\lambda_r\in\Lambda_{u,v}^-}E_{\lambda_r}\mathbf e_u\right)\\
&=\alpha\mathbf e_u + \mathrm{i}\beta (-\mathrm{i}\mathbf e_v)\\
&=\alpha \mathbf{e}_{u}+\beta \mathbf{e}_{v}.
\end{aligned}
\end{equation*}
Therefore, $\Gamma$ admits $(\alpha,\beta)$-FR from $u$ to $v$ at time $t$.
\qed
\end{proof}

Let $u$ and $v$ be strongly cospectral vertices in an oriented graph $\Gamma$. If $\Gamma$ admits $(\alpha, \beta)$-FR with $\alpha, \beta\neq0$ from $u$ to $v$ at time $t$, by \eqref{ipm}, we see that $e^{\mathrm{i}\pi q_r(u,v)} = \pm\mathrm{i}$ for all $\lambda_{r}\in \phi_u$. For later use, we restate these two important subsets of the eigenvalues subject to this condition as follows:
\begin{align*}
 \Lambda_{u,v}^+ &= \{\lambda_r\in \phi_u : E_{\lambda_r}\mathbf e_u = +\mathrm{i}E_{\lambda_r}\mathbf e_v\},\\
\Lambda_{u,v}^- &= \{\lambda_r\in \phi_u : E_{\lambda_r}\mathbf e_u = -\mathrm{i}E_{\lambda_r}\mathbf e_v\}.
\end{align*}

\begin{theorem}\label{Epm}
Let $u$ and $v$ be strongly cospectral vertices in an oriented graph $\Gamma$.  Then
$$\lambda \in \Lambda^+_{u,v}~\text{if and only if}~-\lambda \in \Lambda^-_{u,v}.$$
\end{theorem}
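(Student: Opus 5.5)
The approach is to exploit the fact that $A_\Gamma$ is a real matrix, so complex conjugation pairs the eigenspace of $\lambda$ with that of $\bar\lambda=-\lambda$. Since $A_\Gamma$ is real skew-symmetric, every eigenvalue $\lambda$ is purely imaginary, so $\bar\lambda=-\lambda$. If $\mathbf{x}$ is an eigenvector for $\lambda$, then $\overline{A_\Gamma\mathbf{x}}=A_\Gamma\bar{\mathbf{x}}=\bar\lambda\bar{\mathbf{x}}=-\lambda\bar{\mathbf{x}}$. Thus conjugation maps the eigenspace of $\lambda$ bijectively onto the eigenspace of $-\lambda$, and it sends an orthonormal basis to an orthonormal basis.

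The first step is to relate the projectors. I will work with the full eigenprojector onto the eigenspace of an eigenvalue, i.e. the sum of the rank-one $E_{\lambda_r}$ over all $r$ with $\lambda_r=\lambda$. Condition \eqref{laml=lam2} guarantees that the phase in \eqref{Scospectral} depends only on the eigenvalue. Writing $E_\lambda=\sum_{\lambda_r=\lambda}\mathbf{x}_r\mathbf{x}_r^H$, the previous paragraph shows that $\overline{E_\lambda}=\sum\bar{\mathbf{x}}_r\bar{\mathbf{x}}_r^H$ is the orthogonal projector onto the $(-\lambda)$-eigenspace. Hence $E_{-\lambda}=\overline{E_\lambda}$.

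The second step is to conjugate the defining relation. Suppose $\lambda\in\Lambda^+_{u,v}$, so $E_\lambda\mathbf{e}_u=\mathrm{i}E_\lambda\mathbf{e}_v$ and $E_\lambda\mathbf{e}_u\neq\mathbf{0}$. Since $\mathbf{e}_u$ and $\mathbf{e}_v$ are real, conjugating both sides gives
\[
E_{-\lambda}\mathbf{e}_u=\overline{E_\lambda\mathbf{e}_u}=-\mathrm{i}\,\overline{E_\lambda\mathbf{e}_v}=-\mathrm{i}E_{-\lambda}\mathbf{e}_v .
\]
Moreover $E_{-\lambda}\mathbf{e}_u=\overline{E_\lambda\mathbf{e}_u}\neq\mathbf{0}$, so $-\lambda\in\phi_u$ and therefore $-\lambda\in\Lambda^-_{u,v}$. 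The converse follows by the same computation with the roles of $+$ and $-$ exchanged, applied to $-\lambda$.

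The main obstacle is the bookkeeping with the paper's rank-one projectors $E_{\lambda_r}=\mathbf{x}_r\mathbf{x}_r^H$. When an eigenvalue is repeated, the chosen basis of the $(-\lambda)$-eigenspace need not be the conjugate of the chosen basis of the $\lambda$-eigenspace. I would resolve this in one of two ways: either choose the orthonormal eigenbasis so that the vectors for $-\lambda$ are the conjugates of those for $\lambda$ (with real vectors for $\lambda=0$), or note that strong cospectrality together with \eqref{laml=lam2} makes the relation equivalent to the same statement for the full eigenspace projector. I would also handle the case $\lambda=0$ separately. There $E_0$ is real, so the relation $E_0\mathbf{e}_u=\pm\mathrm{i}E_0\mathbf{e}_v$ would force $E_0\mathbf{e}_u$ to be both real and purely imaginary, hence zero. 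So $0\notin\Lambda^\pm_{u,v}$, and the statement is vacuous for $\lambda=0$.
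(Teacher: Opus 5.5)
Your proof is correct and takes the same route as the paper: use $-\lambda=\overline{\lambda}$ and the identity $E_{-\lambda}=\overline{E_\lambda}$, then conjugate $E_\lambda\mathbf{e}_u=\mathrm{i}E_\lambda\mathbf{e}_v$ using that $\mathbf{e}_u,\mathbf{e}_v$ are real. Your extra bookkeeping fills in details the paper leaves implicit: passing to full eigenspace projectors so that $E_{-\lambda}=\overline{E_\lambda}$ genuinely holds despite the rank-one $E_{\lambda_r}$, checking $-\lambda\in\phi_u$, and observing that the case $\lambda=0$ is vacuous.
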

\begin{proof}
Since the adjacency matrix $A_\Gamma$ is real skew-symmetric, its eigenvalues are either zero or purely imaginary. Thus, if $\lambda$ is an eigenvalue, then $-\lambda = \overline{\lambda}$ is also an eigenvalue. Moreover, the corresponding eigenprojectors satisfy $E_{-\lambda} = \overline{E_\lambda}$.

Suppose that $\lambda \in \Lambda^+_{u,v}$, i.e., $E_\lambda\mathbf e_u = \mathrm{i} E_\lambda\mathbf e_v$.
Taking complex conjugation on both sides, we obtain
$$
E_{-\lambda}\mathbf e_u = -\mathrm{i} E_{-\lambda}\mathbf e_v,
$$
which means $-\lambda \in \Lambda^-_{u,v}$.

The converse direction is similar and therefore omitted.
\qed
\end{proof}

\begin{cor}\label{FRNS2}
Let $u$ and $v$ be strongly cospectral vertices in an oriented graph $\Gamma$. Suppose that $ \phi_u=\Lambda^+_{u,v}\cup\Lambda^-_{u,v}$ and fix $\lambda\in\Lambda^+_{u,v} $. Then $\Gamma$ admits $(\alpha, \beta)$-FR with $\alpha, \beta\neq0$ from $u$ to $v$ at time $t$ if and only if
$$\mathrm{Re}(e^{\lambda t})=\alpha,\qquad\mathrm{Im}(e^{\lambda t})=-\beta, $$
and
$$(\lambda_r - \lambda)t \in 2\pi\mathrm{i}\mathbb{Z} ~~~\text{for each}~ \lambda_{r}\in\Lambda^{+}_{u,v}.$$
\end{cor}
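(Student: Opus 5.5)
The plan is to reduce Corollary~\ref{FRNS2} to Theorem~\ref{FRNS}, using Theorem~\ref{Epm} to handle the eigenvalues in $\Lambda^-_{u,v}$. The hypothesis $\phi_u=\Lambda^+_{u,v}\cup\Lambda^-_{u,v}$ says that $e^{\mathrm{i}\pi q_r(u,v)}=\mathrm{i}$ for $\lambda_r\in\Lambda^+_{u,v}$ and $e^{\mathrm{i}\pi q_r(u,v)}=-\mathrm{i}$ for $\lambda_r\in\Lambda^-_{u,v}$. Under this hypothesis, condition \eqref{ipm} of Theorem~\ref{FRNS} becomes a sign condition on the imaginary part. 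It reads $\mathrm{Im}(e^{\lambda_r t})=-\beta$ for $\lambda_r\in\Lambda^+_{u,v}$ and $\mathrm{Im}(e^{\lambda_r t})=\beta$ for $\lambda_r\in\Lambda^-_{u,v}$. Together with $\mathrm{Re}(e^{\lambda_r t})=\alpha$, Theorem~\ref{FRNS} therefore says that FR at time $t$ holds if and only if
\[
e^{\lambda_r t}=\alpha-\mathrm{i}\beta \ \ (\lambda_r\in\Lambda^+_{u,v}),\qquad e^{\lambda_r t}=\alpha+\mathrm{i}\beta \ \ (\lambda_r\in\Lambda^-_{u,v}).
\]

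\textbf{Forward direction.} Assume $(\alpha,\beta)$-FR occurs at time $t$. For the fixed $\lambda\in\Lambda^+_{u,v}$, the display gives $\mathrm{Re}(e^{\lambda t})=\alpha$ and $\mathrm{Im}(e^{\lambda t})=-\beta$. For any other $\lambda_r\in\Lambda^+_{u,v}$ we get $e^{\lambda_r t}=e^{\lambda t}$, so $e^{(\lambda_r-\lambda)t}=1$. Since $\lambda_r-\lambda$ is purely imaginary and $t$ is real, this is exactly $(\lambda_r-\lambda)t\in 2\pi\mathrm{i}\mathbb{Z}$.

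\textbf{Converse direction.} Assume the three conditions hold. Then $e^{\lambda t}=\alpha-\mathrm{i}\beta$, and the periodicity condition gives $e^{\lambda_r t}=e^{\lambda t}=\alpha-\mathrm{i}\beta$ for every $\lambda_r\in\Lambda^+_{u,v}$.

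The one nonroutine step is the set $\Lambda^-_{u,v}$. Here Theorem~\ref{Epm} gives $\mu\in\Lambda^-_{u,v}$ if and only if $-\mu\in\Lambda^+_{u,v}$. For $\mu\in\Lambda^-_{u,v}$ we then have
\[
e^{\mu t}=\overline{e^{-\mu t}}=\overline{\alpha-\mathrm{i}\beta}=\alpha+\mathrm{i}\beta,
\]
using that $\mu$ is purely imaginary, so $e^{-\mu t}$ and $e^{\mu t}$ are complex conjugates. Hence every $\lambda_r\in\phi_u$ satisfies the sign-matched conditions of Theorem~\ref{FRNS}, and FR follows.

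Two small points need checking. First, $\alpha^2+\beta^2=1$ is automatic, because $|e^{\lambda t}|=1$. Second, a given eigenvalue cannot lie in both $\Lambda^{\pm}_{u,v}$: that would force $E_{\lambda_r}\mathbf e_u=\mathrm{i}E_{\lambda_r}\mathbf e_v=-\mathrm{i}E_{\lambda_r}\mathbf e_v$, hence $E_{\lambda_r}\mathbf e_u=0$, contradicting $\lambda_r\in\phi_u$. So the partition is well defined and the sign condition derived from \eqref{ipm} is consistent.
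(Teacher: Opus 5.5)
Your proposal is correct and follows essentially the same route as the paper: you reduce to Theorem~\ref{FRNS} to get $e^{\lambda_r t}=\alpha\mp\mathrm{i}\beta$ on $\Lambda^{\pm}_{u,v}$, then use Theorem~\ref{Epm} together with $e^{-\lambda t}=\overline{e^{\lambda t}}$ to show that the conditions on $\Lambda^-_{u,v}$ follow from those on $\Lambda^+_{u,v}$. Your two extra checks, that $\alpha^2+\beta^2=1$ is automatic and that $\Lambda^+_{u,v}\cap\Lambda^-_{u,v}=\emptyset$, are valid additions that the paper leaves implicit.
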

\begin{proof}
By Theorem \ref{FRNS}, $(\alpha, \beta)$-FR  with $\alpha, \beta\neq0$ occurs from $u$ to $v$ at time $t$ if and only if for each $\lambda_{r}\in \phi_{u}$,
$$
e^{\lambda_r t} =\left\{
\begin{array}{cc}
  \alpha-\mathrm{i}\beta, & \text{if $\lambda_r\in\Lambda_{u,v}^+$}, \\[0.2cm]
  \alpha+\mathrm{i}\beta, &  \text{if $\lambda_r\in\Lambda_{u,v}^-$}.
\end{array}
\right.
$$
Consequently, $\mathrm{Re}(e^{\lambda t})=\alpha$, $\mathrm{Im}(e^{\lambda t})=-\beta$. Moreover, for every $\lambda_r\in\Lambda^+_{u,v}$, $e^{\lambda_r t} = e^{\lambda t}$ if and only if
\begin{equation}\label{Lam+1}
(\lambda_r - \lambda)t \in 2\pi\mathrm{i}\mathbb{Z}.
\end{equation}
Theorem \ref{Epm} implies that $-\lambda\in\Lambda^-_{u,v} $. Then for every $\lambda_r\in\Lambda^-_{u,v}$, $e^{\lambda_r t} = e^{-\lambda t}$ if and only if
\begin{equation}\label{Lam-1}
(\lambda_r + \lambda)t \in 2\pi\mathrm{i}\mathbb{Z}.
\end{equation}

According to Theorem \ref{Epm}, we have $\Lambda^{+}_{u,v}=-\Lambda^{-}_{u,v}$. It follows that \eqref{Lam+1} and \eqref{Lam-1} are equivalent. Therefore, the FR condition reduces to the single condition \eqref{Lam+1}. This completes the proof.
\qed
\end{proof}

\section{FR on oriented Cayley graphs}\label{Sec-333}

Let $G$ be a finite abelian group of order $n$.  It is well known that $G$ can be decomposed as a direct sum of cyclic groups:
$$
G = \mathbb{Z}_{n_1} \oplus \mathbb{Z}_{n_2} \oplus \cdots \oplus \mathbb{Z}_{n_p},
$$
where $n_s \ge 2$, and $\mathbb{Z}_{m} = (\mathbb{Z}/m\mathbb{Z}, +)$ denotes the cyclic group of order $m$. For an element $z = (z_1,\dots,z_p) \in G$ with $z_s \in \mathbb{Z}_{n_s}$, we define a map
$$
\chi_z : G \to \mathbb{C}, \quad
\chi_z(g) = \prod_{s=1}^p \omega_{n_s}^{z_s g_s} \quad \text{for } g = (g_1,\dots,g_p) \in G,
$$
where $\omega_{n_s} = \exp(2\pi \mathrm{i}/n_s)$ is a primitive $n_s$-th root of unity. The map $\chi_z$ is called a \emph{character} of $G$. Given a character $\chi_z$ and a subset $S \subseteq G$, we write
$$
\chi_z(S) = \sum_{x \in S} \chi_z(x)
$$
to denote the character sum over $S$.

Let $\widehat{G}$ denote the set of all irreducible characters of $G$. There is a natural isomorphism $\phi : G \to \widehat{G}$ given by $\phi(z) = \chi_z$, which satisfies $\chi_z(g) = \chi_g(z)$ for all $z,g \in G$. The trivial character $\chi_0 \in \widehat{G}$ is defined by $\chi_0(g)=1$ for every $g \in G$. Moreover, the first orthogonality relation states that for any $\chi, \psi \in \widehat{G}$,
\begin{equation}\label{OR}
\frac{1}{n} \sum_{g \in G} \chi(g) \overline{\psi(g)} =
\begin{cases}
1, & \chi = \psi,\\[0.2cm]
0, & \chi \ne \psi.
\end{cases}
\end{equation}
In particular, if \(\chi \in \widehat{G}\) is nontrivial (i.e., \(\chi \ne \chi_0\)), then
\begin{equation}\label{CHIsum}
\sum_{g \in G} \chi(g) = \sum_{g \in G} \overline{\chi(g)} = 0.
\end{equation}

Given an abelian group $G$ and a function $f: G \to \mathbb{C}$, the \emph{Cayley color graph} $\mathrm{Cay}(G,f)$ is the oriented graph with vertex set $G$ in which each arc $(x,y)$ is assigned the color $f(x^{-1}y)$ (where $x^{-1}$ denotes the inverse of $x$). The eigenvalues of $\mathrm{Cay}(G,f)$ are given as follows.

\begin{lemma}\emph{(See  \cite[Corollary~3.2] {Babai})} \label{color}
Let $G$ be a finite abelian group and $f: G \to \mathbb{C}$ a connection function. Then the spectrum of $\mathrm{Cay}(G,f)$ is given by $\{\lambda_{k}\mid k\in G\}$, where
$$
\lambda_{k}=\sum_{y\in G}f(y)\chi_{k}(y),~~\text{for all $k\in G$}.
$$
\end{lemma}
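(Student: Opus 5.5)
The statement to be proved is Lemma~\ref{color}: the spectrum of $\mathrm{Cay}(G,f)$ is $\{\lambda_k \mid k\in G\}$ with $\lambda_k=\sum_{y\in G}f(y)\chi_k(y)$. The plan is to exhibit an explicit eigenbasis built from the characters of $G$. Writing $G$ additively, the adjacency (color) matrix $A$ of $\mathrm{Cay}(G,f)$ has $(x,y)$-entry $f(y-x)$, since the arc $(x,y)$ carries the color $f(x^{-1}y)$. For each $k\in G$, I will take the vector $\mathbf{v}_k\in\mathbb{C}^G$ with entries $(\mathbf{v}_k)_g=\chi_k(g)$ and show that it is an eigenvector of $A$ with eigenvalue $\lambda_k$.

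The key computation is a change of variables. For each $x\in G$,
$$(A\mathbf{v}_k)_x=\sum_{y\in G}f(y-x)\chi_k(y).$$
Substituting $z=y-x$, which is a bijection of $G$, and using the multiplicativity $\chi_k(x+z)=\chi_k(x)\chi_k(z)$, this becomes
$$\chi_k(x)\sum_{z\in G}f(z)\chi_k(z)=\lambda_k(\mathbf{v}_k)_x.$$
Multiplicativity holds directly from the product formula defining $\chi_z$, because $\omega_{n_s}^{k_s(x_s+z_s)}=\omega_{n_s}^{k_sx_s}\,\omega_{n_s}^{k_sz_s}$. Hence $A\mathbf{v}_k=\lambda_k\mathbf{v}_k$.

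It remains to see that these $n$ eigenvectors account for the whole spectrum, with multiplicities. For this I will invoke the first orthogonality relation \eqref{OR} together with the symmetry $\chi_k(g)=\chi_g(k)$. Together they give $\mathbf{v}_k^H\mathbf{v}_{k'}=n\,\delta_{k,k'}$, so the vectors $\{\mathbf{v}_k\}_{k\in G}$ are mutually orthogonal. Since there are $n=|G|$ of them, they form a basis of $\mathbb{C}^n$. The matrix $P=\frac{1}{\sqrt n}[\mathbf{v}_k]_{k\in G}$ is therefore unitary and satisfies $P^HAP=\mathrm{diag}(\lambda_k)_{k\in G}$. It follows that the spectrum of $A$, counted with multiplicity, is exactly $\{\lambda_k\mid k\in G\}$.

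There is no real obstacle here. The only point needing care is the index convention, namely whether the entry is $f(y-x)$ or $f(x-y)$. The other convention would yield $\sum_y f(y)\overline{\chi_k(y)}=\lambda_{-k}$, which merely reindexes the same multiset over $G$. So the stated spectrum holds under either convention.
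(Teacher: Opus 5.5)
Your proof is correct, and it is a genuinely different route from the one the paper relies on, since the paper gives no proof and only cites Babai. In Babai the abelian formula is a corollary of a theorem for arbitrary finite groups. There, the color matrix is the image of $\sum_g f(g)g$ under the regular representation, and decomposing that representation into irreducibles groups the eigenvalues by irreducible characters. These eigenvalues are then pinned down through power sums, i.e.\ traces. When every irreducible is one-dimensional this collapses to $\lambda_k=\sum_y f(y)\chi_k(y)$.

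You instead check directly that the character vectors $\mathbf v_k$ are eigenvectors and are mutually orthogonal. This buys three things:
\begin{itemize}
\item it is elementary and self-contained;
\item it gets multiplicities for free from the unitary similarity $P^HAP=\mathrm{diag}(\lambda_k)_{k\in G}$;
\item it also delivers the eigenvectors, which the bare spectral statement does not.
\end{itemize}
That last point matters, because the eigenvectors are exactly what the paper uses right afterwards. It writes $P^HA_\Gamma P$ as a diagonal matrix and uses $\widetilde E_k=p_kp_k^H$ as eigenprojectors. It does so again in the semi-Cayley section, where $P$ is said to diagonalize every $G$-circulant matrix. So your argument justifies more of what the paper actually needs than the cited statement does. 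Babai's route, in exchange, covers nonabelian groups.

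Two small remarks. First, the symmetry $\chi_k(g)=\chi_g(k)$ is not needed for orthogonality. Applying the first orthogonality relation to $\chi_{k'}$ and $\chi_k$, together with injectivity of $k\mapsto\chi_k$, already gives $\mathbf v_k^H\mathbf v_{k'}=n\,\delta_{k,k'}$. Second, your observation that the opposite index convention only reindexes the spectrum via $k\mapsto -k$ is correct.
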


Let $S$ be a subset of $G$ such that $S\cap S^{-1}=\emptyset $, where $S^{-1}=\{s^{-1}\mid s\in S\}$. Then $\Gamma=\mathrm{Cay}(G,S)$ denotes the \emph{oriented Cayley graph} with vertex set $G$, and there is an oriented arc from $u$ to $v$ if and only if $u^{-1}v\in S$. According to the definition of the adjacency matrix of oriented graph, we define a function $f_S:~G\to \mathbb{R}$ by
$$
f_S(g)=\left\{\begin{array}{cl}
   1, & \text{if $g\in S$},  \\[0.2cm]
-1, & \text{if $g\in S^{-1}$},  \\[0.2cm]
 0, & \text{otherwise}.
              \end{array}\right.
$$
Then the oriented Cayley graph $\mathrm{Cay}(G,S)$ is exactly the Cayley color graph $\mathrm{Cay}(G,f_S)$. By Lemma \ref{color}, the spectrum of $\mathrm{Cay}(G,S)$ is given as follows.

\begin{lemma}\label{oriented}
Let $G$ be a finite abelian group, and let $S\subseteq G$ be a subset satisfying $S\cap S^{-1}=\emptyset $. Then the spectrum of $\mathrm{Cay}(G,S)$ is given by $\{\lambda_{k}\mid k\in G\}$, where
$$
\lambda_{k}=\chi_{k}(S)-\chi_{k}(S^{-1}),~~\text{for all $k\in G$}.
$$
\end{lemma}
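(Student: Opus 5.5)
The statement is a direct specialization of Lemma \ref{color}, so I will substitute the connection function $f_S$ into that formula and simplify. The only preliminary point is that $\mathrm{Cay}(G,S)$ really is the Cayley color graph $\mathrm{Cay}(G,f_S)$, i.e.\ that their adjacency matrices agree entrywise.

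\textbf{Step 1: the adjacency matrices agree.} The color of the arc $(x,y)$ in $\mathrm{Cay}(G,f_S)$ is $f_S(x^{-1}y)$, and I compare this with the oriented adjacency entry $a_{xy}$ case by case.
\begin{itemize}
\item If $x^{-1}y\in S$, there is an arc from $x$ to $y$, so $a_{xy}=1=f_S(x^{-1}y)$.
\item If $x^{-1}y\in S^{-1}$, then $y^{-1}x=(x^{-1}y)^{-1}\in S$, so the arc goes from $y$ to $x$ and $a_{xy}=-1=f_S(x^{-1}y)$.
\item Otherwise $a_{xy}=0=f_S(x^{-1}y)$.
\end{itemize}
The hypothesis $S\cap S^{-1}=\emptyset$ makes these cases disjoint, so $f_S$ is well defined and the graph is genuinely oriented. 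It also rules out $0\in S$ (for abelian $G$ I write the operation additively, so $0$ is the identity), since $0=0^{-1}$.

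\textbf{Step 2: evaluate the eigenvalue formula.} Lemma \ref{color} gives the spectrum as $\{\lambda_k\mid k\in G\}$ with
$$\lambda_k=\sum_{y\in G}f_S(y)\chi_k(y).$$
The support of $f_S$ is the disjoint union $S\cup S^{-1}$, so the sum splits into two pieces:
$$\lambda_k=\sum_{y\in S}\chi_k(y)-\sum_{y\in S^{-1}}\chi_k(y)=\chi_k(S)-\chi_k(S^{-1}).$$

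Nothing here is a real obstacle. The step needing the most care is Step 1, specifically the sign convention: whether Babai's convention colors the arc $(x,y)$ by $f(x^{-1}y)$ or by $f(y^{-1}x)$.
\begin{itemize}
\item If the convention were reversed, the formula would pick up an overall sign and give $\chi_k(S^{-1})-\chi_k(S)$.
\item That is harmless. Because $\chi_k(S^{-1})=\overline{\chi_k(S)}$, we have $\lambda_k=2\mathrm{i}\,\mathrm{Im}\,\chi_k(S)$, and $-\lambda_k=\lambda_{-k}$.
\item So the multiset $\{\lambda_k\}$ is unchanged either way.
\end{itemize}
I will add a remark to this effect, which also confirms that the eigenvalues are purely imaginary, as expected for a skew-symmetric matrix.
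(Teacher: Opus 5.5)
Your proposal is correct and follows the paper's argument. Like the paper, you identify $\mathrm{Cay}(G,S)$ with the Cayley color graph $\mathrm{Cay}(G,f_S)$ and evaluate Lemma~\ref{color}, and your entrywise check, the split of the sum over the disjoint sets $S$ and $S^{-1}$, and the remark that $\lambda_{-k}=-\lambda_k$ (so the multiset does not depend on the arc-color convention) make explicit what the paper leaves implicit.
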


For convenience, we denote the oriented Cayley graph $\mathrm{Cay}(G,S)$ by $\Gamma$. Define the character table $P=\frac{1}{\sqrt{\left|G\right|}}(\chi_{g}(h))_{g,h\in G}$ and let $p_k$ be the $k$-th column of $P$. By Lemma \ref{oriented}, we have
\begin{equation*} 
P^{H}A_{\Gamma}P=\mathrm{diag}(\chi_{g}(S)-\chi_{g}(S^{-1}))_{g\in G}.
\end{equation*}
The spectral decomposition of $A_\Gamma$ is written as
\begin{equation}\label{DecomA}
A_\Gamma=\sum_{k\in G}\lambda_{k}\widetilde{E}_{k},
\end{equation}
where
\begin{equation}\label{EX}
\widetilde{E}_{k}=p_k p_k^{H}=\frac{1}{\left|G\right|}(\chi_{k}(gh^{-1}))_{g,h\in G}.
\end{equation}
Since all the characters $\chi\in \widehat{G}$ satisfy the first orthogonality relation, it follows from \eqref{EX} that
\begin{equation}\label{Firstorth}
\widetilde{E}_x\widetilde{E}_y=\begin{cases}
\widetilde{E}_x, & \text{ if } x=y, \\[0.2cm]
0, & \text{ if } x\neq y.
\end{cases}
\end{equation}
By \eqref{transM}, \eqref{DecomA} and \eqref{Firstorth}, the transition matrix of the quantum walk on $\Gamma$ (with respect to $A_{\Gamma}$) can be written as
\begin{equation}\label{UIA}
U(t)=\exp(A_{\Gamma}t)=\sum_{k\in G}\exp(\lambda_{k}t)\widetilde{E}_{k}.
\end{equation}
Then for any vertices $a, b$ of $\Gamma$, by \eqref{EX} and \eqref{UIA}, we have
\begin{equation}\label{Uaa}
U(t)_{a,a}=\frac{1}{n}\sum_{k\in G}\exp(\lambda_{k}t),
\end{equation}
and
\begin{equation}\label{Uab}
U(t)_{b,a}=\frac{1}{n}\sum_{k\in G}\exp(\lambda_{k}t)\chi_{k}(ba^{-1}).
\end{equation}

\begin{theorem}\label{AnonFR }
Let $G$ be a finite abelian group of order $n$, and let $S\subseteq G$ be a subset satisfying $S\cap S^{-1}=\emptyset $. Then $\mathrm{Cay}(G,S)$ does not admit $(\alpha,\beta)$-FR with $\alpha,\beta\neq0$.
\end{theorem}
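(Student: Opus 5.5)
The plan is to argue by contradiction, using only the explicit spectral form \eqref{UIA} of $U(t)$ together with character orthogonality. Suppose $\mathrm{Cay}(G,S)$ admits $(\alpha,\beta)$-FR with $\alpha\beta\neq 0$ from a vertex $a$ to a vertex $b\neq a$ at time $t$, so that $U(t)\mathbf e_a=\alpha\mathbf e_a+\beta\mathbf e_b$. Put $c=ba^{-1}\neq 0$. Generalizing \eqref{Uab} to an arbitrary vertex $g$, the $a$-th column of $U(t)$ has entries
$$
U(t)_{g,a}=\frac1n\sum_{k\in G}e^{\lambda_k t}\chi_k(ga^{-1}).
$$
The FR hypothesis says these entries equal $\alpha$ for $g=a$, $\beta$ for $g=b$, and $0$ for every other $g$. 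In other words, the function $k\mapsto e^{\lambda_k t}$ on $G$ has a ``Fourier transform'' supported on the two points $0$ and $c$.

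Next we invert this transform. Multiply by $\overline{\chi_k(ga^{-1})}$, sum over $g\in G$, and apply the orthogonality relation \eqref{OR} together with the symmetry $\chi_k(g)=\chi_g(k)$. This should give, for every $k\in G$,
$$
e^{\lambda_k t}=\alpha+\beta\,\overline{\chi_k(c)}.
$$
Equivalently, $e^{\lambda_k t}=\sum_{g}U(t)_{g,a}\overline{\chi_k(ga^{-1})}$. This inversion is the one step where care with conjugations and with the index convention of \eqref{EX} is needed. Even if I get a conjugate wrong, the resulting identity is still of the form $\alpha+\beta\,\zeta_k$ with $\zeta_k$ equal to $\chi_k(c)$ or its conjugate, and the argument below is unchanged.

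Now use that $A_\Gamma$ is real skew-symmetric. By Lemma~\ref{oriented}, $\lambda_k=\chi_k(S)-\overline{\chi_k(S)}$ is purely imaginary, so $|e^{\lambda_k t}|=1$. Since $\alpha,\beta$ are real with $\alpha^2+\beta^2=1$ and $|\chi_k(c)|=1$, taking moduli gives
$$
1=|\alpha+\beta\,\overline{\chi_k(c)}|^2=1+2\alpha\beta\,\mathrm{Re}\,\chi_k(c),
$$
and hence $\mathrm{Re}\,\chi_k(c)=0$ for every $k\in G$, because $\alpha\beta\neq0$.

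Finally, take the trivial character $k=0$. Then $\chi_0(c)=1$, whose real part is $1\neq 0$, a contradiction. Therefore no proper FR exists on $\mathrm{Cay}(G,S)$. Alternatively, one could sum $\mathrm{Re}\,\chi_k(c)=0$ over $k$ and compare with \eqref{CHIsum}, but the trivial character gives the contradiction immediately. I expect no real obstacle beyond the bookkeeping in the inversion step.
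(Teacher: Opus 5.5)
Your proof is correct, but it takes a different route from the paper. The paper never uses the eigenvalues. It relies only on two facts: $U(t)_{b,a}$ depends only on $ba^{-1}$, and all vertices are cospectral. Cospectrality lets Theorem~\ref{FR=PST} supply the partner column $U(t)\mathbf e_{uv}=\alpha\mathbf e_{uv}-\beta\mathbf e_u$, while translating the FR column gives $U(t)_{uv^2,uv}=\beta$. A unitarity norm count on these entries forces $v^2=e$. Then $U(t)_{u,uv}=U(t)_{uv^2,uv}=\beta$ contradicts $U(t)_{u,uv}=-\beta$.

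You instead Fourier-invert the FR column to get $e^{\lambda_k t}=\alpha+\beta\,\overline{\chi_k(c)}$ for all $k$. This is the same device the paper uses later in Theorem~\ref{NONCOSPECTRAL}, and your inversion and modulus computation are right. Notice, though, that only $k=0$ is ever used. There $\lambda_0=\chi_0(S)-\chi_0(S^{-1})=0$, so the identity reads $1=\alpha+\beta$, and squaring against $\alpha^2+\beta^2=1$ gives $2\alpha\beta=0$ immediately.

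Stripped down, your argument says: every vertex of $\mathrm{Cay}(G,S)$ has in-degree and out-degree $|S|$. Hence $\mathbf 1^{\mathsf T}A_\Gamma=\mathbf 0^{\mathsf T}$, so $\mathbf 1^{\mathsf T}U(t)=\mathbf 1^{\mathsf T}$, and every column of $U(t)$ sums to $1$.

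This buys noticeably more than the paper's proof:
\begin{itemize}
\item It excludes proper FR in every oriented graph with $A_\Gamma\mathbf 1=\mathbf 0$, with no group structure needed. In particular it covers oriented Cayley graphs over nonabelian groups, which the paper lists as open.
\item It shows why oriented semi-Cayley graphs can escape this obstruction: there $(g,0)$ has out-degree $|R|+|S|$ but in-degree $|R|$.
\end{itemize}
The paper's argument, in turn, needs no spectral information and exercises the Section 2 machinery (cospectrality and Theorem~\ref{FR=PST}).

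One correction: your suggested alternative, summing $\mathrm{Re}\,\chi_k(c)=0$ over $k$, yields nothing. For $c\neq e$ one has $\sum_k\chi_k(c)=\sum_k\chi_c(k)=0$ by \eqref{CHIsum}, which is perfectly consistent with $\mathrm{Re}\,\chi_k(c)=0$ for every $k$. The trivial character alone carries the contradiction.
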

\begin{proof} Assume that $\operatorname{Cay}(G,S)$ admits $(\alpha,\beta)$-FR with $\alpha,\beta\neq0$ from a vertex $u$ to a vertex $uv$ at time $t$, where $v\in G\setminus\{e\}$ is some non-identity element. By \eqref{FR1} and \eqref{Uaa}, we have
\begin{equation}\label{U1a}
U(t)_{u,u}=U(t)_{uv,uv}=\alpha.
\end{equation}
By \eqref{FR1} and \eqref{Uab}, we have
\begin{equation}\label{U2a}
U(t)_{uv,u}=U(t)_{uv^2,uv}=\beta.
\end{equation}
Combining \eqref{U1a} with \eqref{U2a}, we know that $\mathrm{Cay}(G,S)$ admits $(\alpha, \beta)$-FR from $uv$ to $uv^2$ at the same time $t$.

On the other hand, it follows from \eqref{EX} that all vertices in $\mathrm{Cay}(G,S)$ are cospectral. Hence, by Theorem \ref{FR=PST},
\begin{equation}\label{U3a}
U(t)_{u,uv}=U(t)_{uv,uv^2}=-\beta.
\end{equation}
If $uv^2 \neq u$, then by \eqref{U1a}, \eqref{U2a} and \eqref{U3a},
$$
\left |  U(t)_{uv,u} \right | ^2+\left | U(t)_{uv,uv} \right | ^2+\left |  U(t)_{uv,uv^2}\right | ^2=\alpha^2+2\beta^2>1,
$$
since $\alpha^2+\beta^2=1$ and $\beta\neq 0$. This contradicts the unitarity of $U(t)$. Thus we must have $uv^2 = u$, i.e., $v^2 = e$. Now combining \eqref{U2a} and \eqref{U3a}, we have
$$
U(t)_{uv,u}=\beta=U(t)_{u,uv}=-\beta,
$$
which forces $\beta=0$, contradicting the assumption $\beta\neq0$.
\qed
\end{proof}

\section{FR on oriented semi-Cayley graphs}\label{Sec-444}

In this section, we investigate the existence of FR on \emph{oriented semi-Cayley graphs} over abelian groups.

\begin{definition}
Let $G$ be a finite group with identity $e$, and let $R, L, S\subseteq  G$ be subsets satisfying
$$
R\cap R^{-1}=L\cap L^{-1}=\emptyset \text{~and~} e\notin R\cup L.
$$
The \emph{oriented semi-Cayley graph} $\mathrm{OSC}(G,R,L,S)$ is the oriented graph whose vertex set is
$$
\{(g,0), (g,1)\mid g\in G\},
$$
and whose directed edges are given by
$$
\begin{matrix}
\{((g,0), (h,0))\mid g^{-1}h\in R\}& ~~\text{(edges on the right side)},\\[0.2cm]
\{((g,1), (h,1))\mid g^{-1}h\in L\}& ~~\text{(edges on the left side)},\\[0.2cm]
\{((g,0), (h,1))\mid g^{-1}h\in S\}& ~~\text{(edges from the right side to the left side)}.
\end{matrix}
$$
\end{definition}

Let $\Gamma = \mathrm{OSC}(G,R,L,S)$ be an oriented semi-Cayley graph over an abelian group $G$ of order $n$.
Denote by $A$ and $C$ the adjacency matrices of the oriented Cayley graphs $\mathrm{Cay}(G,R)$ and $\mathrm{Cay}(G,L)$, respectively.
Define the matrix $B \in \mathbb{R}^{n \times n}$, with rows and columns indexed by the elements of $G$, via
$$
B_{g,h} = \begin{cases}
1, & \text{if } g^{-1}h\in S,\\[0.2cm]
0, & \text{otherwise}.
\end{cases}
$$
Then the adjacency matrix of $\Gamma$ can be written as
$$
A_{\Gamma}= \begin{pmatrix}
A & B \\
-B^{\mathsf{T}} & C
\end{pmatrix}.
$$
Observe that $B$ is precisely the adjacency matrix of the Cayley color graph $\mathrm{Cay}(G, f)$ determined by the indicating function
$$
f(g) = \begin{cases}
1, & \text{if } g\in S,\\[0.2cm]
0, & \text{otherwise}.
\end{cases}
$$

Recall that for every character $\chi_k$ ($k \in G$) of the abelian group $G$, the character table $P$ diagonalizes any $G$-circulant matrix.
Thus, by the spectral decomposition of Cayley graphs, the matrices $A, B$ and $C$ are simultaneously diagonalized by the character table $P$ of $G$:
\begin{equation}\label{rls}
\begin{matrix}
 \Lambda_{R}=\mathrm{diag}(\chi_{g}(R)-\chi_{g}(R^{-1}))_{g\in G}=P^{H}AP,\\[0.2cm]
 \Lambda_{L}=\mathrm{diag}(\chi_{g}(L)-\chi_{g}(L^{-1}))_{g\in G}=P^{H}CP,\\[0.2cm]
 \Lambda_{S}=\mathrm{diag}(\chi_{g}(S))_{g\in G}=P^{H}BP.
\end{matrix}
\end{equation}
Set
$$
D=\begin{pmatrix}
P  & 0 \\
0  & P
\end{pmatrix},
$$
which is a unitary matrix. Using \eqref{rls}, we obtain
$$
D^{H}A_{\Gamma}D=
\begin{pmatrix}
P^{H}  & 0 \\
0  & P^{H}
\end{pmatrix}
\begin{pmatrix}
A  & B \\
-B^H  & C
\end{pmatrix}
\begin{pmatrix}
P  & 0 \\
0  & P
\end{pmatrix}=
\begin{pmatrix}
 \Lambda_{R}  &  \Lambda_{S} \\
 -(\Lambda_{S})^{H}  &  \Lambda_{L}
\end{pmatrix}=
\widehat{A_{\Gamma}}.
$$
Since $D$ is unitary, $A_{\Gamma}$ and $\widehat{A}_{\Gamma}$ share the same eigenvalues. Notice that $\Lambda_{R}$, $\Lambda_{L}$ and $\Lambda_{S}$ are diagonal matrices. Hence, after a suitable permutation of the rows and columns, $\widehat{A_{\Gamma}}$ can be brought into a block-diagonal form consisting of $n$ blocks of size $2\times 2$, each of the form
\begin{equation}\label{Ai}
A_{k}=\begin{pmatrix}
  \chi_{k}(R)-\chi_{k}(R^{-1})&   \chi_{k}(S)\\[0.2cm]
  -\overline{\chi_{k}(S)}  &   \chi_{k}(L)-\chi_{k}(L^{-1})
\end{pmatrix}.
\end{equation}
Consequently, the characteristic polynomial of $A_{\Gamma}$ factors as
$$
\det(\lambda I - \widehat{A}_{\Gamma}) = \prod_{k\in G} \det(\lambda I - A_{k}).
$$

\begin{lemma}\label{Specsemi-Cayley}
Let $\Gamma = \mathrm{OSC}(G,R,L,S)$ be an oriented semi-Cayley graph over an abelian group $G$ of order $n$. Denote
$$
r_{k} =\chi_{k}(R)-\chi_{k}(R^{-1}),~~ l_k =\chi_{k}(L)-\chi_{k}(L^{-1}).
$$
\begin{itemize}
\item[\rm(a)] The eigenvalues of $A_{\Gamma}$ are
\begin{equation}\label{lam+-}
\lambda_{k}^{\pm}=\frac{1}{2}\left(r_{k}+l_k \pm \mathrm{i}\left |\sqrt{(l_k-r_{k})^2-4\left | \chi_{k}(S) \right |^2 }  \right | \right), ~~~k\in G.
\end{equation}
\item[\rm(b)] For each $k\in G$, let $v_{k}^{\pm}=(a_{k}^{\pm},b_{k}^{\pm})^{\mathsf{T}}$ be the normalized eigenvectors of $A_{k}$ corresponding to $\lambda_{k}^{\pm}$, where $\ast^{\mathsf{T}}$ denotes the transpose of $\ast$. Then the eigenprojectors of $A_{\Gamma}$ associated with $\lambda_{k}^{\pm}$ are
\begin{equation}\label{EigenSEMI}
E_{\lambda_{k}}^{\pm}=
\begin{pmatrix}
\left|a_{k}^{\pm}\right|^{2}  & a_{k}^{\pm}\overline{b_{k}^{\pm}} \\[0.2cm]
 b_{k}^{\pm}\overline{a_{k}^{\pm}} & \left|b_{k}^{\pm}\right|^{2}
\end{pmatrix}\otimes \widetilde{E}_k, ~~~k\in G,
\end{equation}
where $\widetilde{E}_{k}$ is defined in \eqref{EX}. In particular, if $\chi_{k}(S)=0$, then we use the convention $\lambda_k^+=r_k$, $\lambda_k^-=l_k$, and choose
$$
v_{k}^{+}=(1,0)^{\mathsf{T}},\qquad v_{k}^{-}=(0,1)^{\mathsf{T}}.
$$
In this case,
\begin{equation}\label{chik0}
E_{\lambda_{k}}^{+}=
\begin{pmatrix}
1  & 0 \\
0 & 0
\end{pmatrix}\otimes \widetilde{E}_k, \text{~and~~} E_{\lambda_{k}}^{-}=
\begin{pmatrix}
0  & 0 \\
0 & 1
\end{pmatrix}\otimes \widetilde{E}_k.
\end{equation}
\end{itemize}
Moreover, the transition matrix of the quantum walk on $\Gamma$ (with respect to $A_{\Gamma}$) can be written as
\begin{equation}\label{USEMI}
U(t)=\sum_{k\in G}\sum_{\pm}\exp(\lambda_{k}^{\pm}t)E_{\lambda_{k}}^{\pm}.
\end{equation}
\end{lemma}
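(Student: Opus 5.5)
The plan is to reduce everything to the $2\times 2$ blocks $A_k$ via the unitary $D$, and then lift the spectral data back using the Kronecker structure.

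\textbf{Part (a).} First observe that $r_k$ and $l_k$ are purely imaginary. Indeed, $\chi_k(g^{-1})=\overline{\chi_k(g)}$, so
$$r_k=\chi_k(R)-\overline{\chi_k(R)}=2\mathrm{i}\,\mathrm{Im}\,\chi_k(R),$$
and similarly for $l_k$. Hence $A_k$ is skew-Hermitian, and its eigenvalues are purely imaginary. Its trace is $r_k+l_k$ and its determinant is $r_kl_k+|\chi_k(S)|^2$. The quadratic formula gives
$$\lambda=\tfrac12\Bigl(r_k+l_k\pm\sqrt{(r_k-l_k)^2-4|\chi_k(S)|^2}\Bigr).$$
Since $r_k-l_k$ is purely imaginary, $(l_k-r_k)^2\le 0$, so the radicand is a nonpositive real number. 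Its square root is therefore $\mathrm{i}$ times the modulus, which gives \eqref{lam+-}. Because $\widehat{A_\Gamma}$ is permutation-similar to $\bigoplus_k A_k$ and unitarily similar to $A_\Gamma$, these are exactly the eigenvalues of $A_\Gamma$.

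\textbf{Part (b).} For an eigenvector $v_k^{\pm}=(a_k^\pm,b_k^\pm)^{\mathsf T}$ of $A_k$, I will check directly that $(a_k^\pm p_k,\; b_k^\pm p_k)^{\mathsf T}=v_k^\pm\otimes p_k$ is a unit eigenvector of $A_\Gamma$ with eigenvalue $\lambda_k^\pm$. This follows from $Ap_k=r_kp_k$, $Bp_k=\chi_k(S)p_k$, $Cp_k=l_kp_k$, and $B^{\mathsf T}p_k=\overline{\chi_k(S)}p_k$; the last holds because $B$ is real and $G$-circulant, so $B^{\mathsf T}=B^H$ is diagonalized by $P$ with conjugate eigenvalues.

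Since $A_k$ is skew-Hermitian, hence normal, we can take $v_k^+\perp v_k^-$. This holds automatically when $\lambda_k^+\ne\lambda_k^-$, and by the stated choice when $\chi_k(S)=0$; in the degenerate case $\chi_k(S)\neq0$ with $\lambda_k^+=\lambda_k^-$, pick any orthonormal eigenbasis. Then $\{v_k^\pm\otimes p_k\}_{k,\pm}$ is an orthonormal basis of $\mathbb C^{2n}$, because the $p_k$ are orthonormal. The rank-one projector is
$$(v\otimes p_k)(v\otimes p_k)^H=(vv^H)\otimes(p_kp_k^H),$$
which yields \eqref{EigenSEMI} using \eqref{EX}.

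In the case $\chi_k(S)=0$, the matrix $A_k$ is diagonal with entries $r_k$ and $l_k$, which justifies the convention and \eqref{chik0}. Note that $E_{\lambda_k}^\pm$ is used here in the sense of the paper's Section 2, namely as the projector onto a chosen eigenvector, which is how \eqref{Udecompose} was set up.

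\textbf{Transition matrix.} These $2n$ rank-one projectors are mutually orthogonal idempotents summing to $I$, with $A_\Gamma=\sum_{k,\pm}\lambda_k^\pm E_{\lambda_k}^\pm$. Exponentiating as in \eqref{Udecompose} gives \eqref{USEMI}.

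The main point needing care is the modulus/sign bookkeeping in (a): one must confirm that the radicand is real and nonpositive, so that the absolute value form in \eqref{lam+-} is legitimate. The remaining steps are routine verification.
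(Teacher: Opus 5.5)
Your proposal is correct and follows the same route as the paper: reduce to the $2\times2$ blocks $A_k$ via $D$, solve the characteristic polynomial of $A_k$, then lift the eigenvectors as $v_k^\pm\otimes p_k$ (the paper takes this step from Arez's Lemma 11 rather than verifying it) to obtain the Kronecker-form projectors and \eqref{USEMI}. You also fill in details the paper leaves implicit, namely that $r_k,l_k$ are purely imaginary so the radicand is real and nonpositive, and the direct check $B^{\mathsf T}p_k=\overline{\chi_k(S)}\,p_k$; your extra ``degenerate case'' with $\chi_k(S)\neq0$ and $\lambda_k^+=\lambda_k^-$ cannot actually occur, because then the discriminant $(l_k-r_k)^2-4|\chi_k(S)|^2$ is strictly negative.
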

\begin{proof}
(a) From the block decomposition of $\widehat{A}_{\Gamma}$, the spectrum of $A_{\Gamma}$, counted with multiplicities, is the union of the spectra of the $2\times2$ blocks $A_k$ ($k\in G$). Computing the characteristic polynomial of $A_k$ defined in \eqref{Ai} and solving for its roots yields exactly the expression~\eqref{lam+-}. Hence, (a) follows.

(b) Fix $k\in G$ and let $v_{k}^{\pm}=(a_k^{\pm}, b_k^{\pm})^{\mathsf{T}}$ be the normalized eigenvectors of $A_k$ for $\lambda_k^{\pm}$. Following the argument of \cite[Lemma 11]{Arez}, the corresponding eigenvectors of $A_{\Gamma}$ are given by
$$
\widetilde{v}_k^{\pm} = \frac{1}{\sqrt{n}}  (a_k^{\pm},b_k^{\pm} )^{\mathsf{T}}\otimes \left( \chi_k(g_1),\dots,\chi_k(g_n) \right)^{\mathsf{T}} .
$$
Since the vectors $\tilde{v}_k^\pm$ form an orthonormal eigenbasis of $A_{\Gamma}$, a direct computation yields the block Kronecker product formula in \eqref{EigenSEMI}.
\qed
\end{proof}

Let $v_{k}^{\pm}=(a_{k}^{\pm},b_{k}^{\pm})^{\mathsf{T}}$ be as in Lemma \ref{Specsemi-Cayley} (b). Define
\begin{equation}\label{DdefineCDE}
c_{k}^{\pm}=|a_{k}^{\pm}|^{2},\quad
d_{k}^{\pm}=|b_{k}^{\pm}|^{2},\quad
e_{k}^{\pm}=a_{k}^{\pm}\overline{b_{k}^{\pm}}.
\end{equation}

Suppose that $u=(g,r)$ and $v=(h,s)$ are two vertices in $\Gamma$, where $r,s\in \{0,1\}$. By \eqref{EigenSEMI} and \eqref{USEMI}, we have
\begin{equation}\label{UENTRY}
U(t)_{u, v}=\left\{\begin{array}{ll}
\frac{1}{n} \sum_{k \in G}\left(c_{k}^{+} \exp \left(t \lambda_{k}^{+}\right)+c_{k}^{-} \exp \left(t \lambda_{k}^{-}\right)\right) \chi_{k}\left(g h^{-1}\right), & r=s=0, \\\\
\frac{1}{n} \sum_{k \in G}\left(d_{k}^{+} \exp \left(t \lambda_{k}^{+}\right)+d_{k}^{-} \exp \left(t \lambda_{k}^{-}\right)\right) \chi_{k}\left(g h^{-1}\right), & r=s=1, \\\\
\frac{1}{n} \sum_{k \in G}\left(e_{k}^{+} \exp \left(t \lambda_{k}^{+}\right)+e_{k}^{-} \exp \left(t \lambda_{k}^{-}\right)\right) \chi_{k}\left(g h^{-1}\right), & r=0, s=1, \\\\
\frac{1}{n} \sum_{k \in G}\left(\overline{e_{k}^{+}} \exp \left(t \lambda_{k}^{+}\right)+\overline{e_{k}^{-}} \exp \left(t \lambda_{k}^{-}\right)\right) \chi_{k}\left(g h^{-1}\right), & r=1, s=0.
\end{array}\right.
\end{equation}

\begin{lemma}\label{CPMsum}
Let $c_{k}^{\pm}, d_{k}^{\pm}, e_{k}^{\pm}$ be as in \eqref{DdefineCDE}. Then
$$
c_{k}^{+}+c_{k}^{-}=d_{k}^{+}+d_{k}^{-}=1, ~e_{k}^{+}+e_{k}^{-}=0.
$$
In particular, if $\chi_{k}(S)=0$, then
$$c_{k}^+=d_{k}^-=1,~c_{k}^-=d_{k}^+=e_{k}^{+}=e_{k}^{-}=0.$$
\end{lemma}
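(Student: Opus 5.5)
The identities will follow from the fact that, for each fixed $k\in G$, the vectors $v_k^{+}$ and $v_k^{-}$ form an orthonormal basis of $\mathbb{C}^2$. The plan is to encode this as a unitary matrix and read off its consequences entrywise.

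First I will check that $v_k^{+}$ and $v_k^{-}$ are orthonormal. The block $A_k$ in \eqref{Ai} is skew-Hermitian, because $r_k=\chi_k(R)-\chi_k(R^{-1})=\chi_k(R)-\overline{\chi_k(R)}$ is purely imaginary, $l_k$ is purely imaginary for the same reason, and the off-diagonal entries are $\chi_k(S)$ and $-\overline{\chi_k(S)}$. Hence $A_k$ is normal, so eigenvectors belonging to distinct eigenvalues are orthogonal.

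If $\lambda_k^{+}=\lambda_k^{-}$, I will use the choice of eigenvectors implicit in Lemma~\ref{Specsemi-Cayley}(b). That lemma requires the $\widetilde v_k^{\pm}$ to form an orthonormal eigenbasis, so within each $k$ the pair $v_k^{+},v_k^{-}$ must be orthonormal. In the case $\chi_k(S)=0$ the lemma fixes $v_k^{+}=(1,0)^{\mathsf T}$ and $v_k^{-}=(0,1)^{\mathsf T}$, which are already orthonormal. The main point needing care is this degenerate-eigenvalue case, and it is settled by the orthonormality built into the lemma's statement.

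Next, form the $2\times 2$ matrix $V_k=[v_k^{+}\ v_k^{-}]$, which is unitary. Then $V_kV_k^{H}=I_2$ as well, and expanding gives
$$
V_kV_k^{H}=v_k^{+}(v_k^{+})^{H}+v_k^{-}(v_k^{-})^{H}=\begin{pmatrix} c_k^{+}+c_k^{-} & e_k^{+}+e_k^{-}\\ \overline{e_k^{+}}+\overline{e_k^{-}} & d_k^{+}+d_k^{-}\end{pmatrix}.
$$
Comparing this entrywise with $I_2$ yields $c_k^{+}+c_k^{-}=d_k^{+}+d_k^{-}=1$ and $e_k^{+}+e_k^{-}=0$.

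Finally, when $\chi_k(S)=0$, substituting $v_k^{+}=(1,0)^{\mathsf T}$ and $v_k^{-}=(0,1)^{\mathsf T}$ into \eqref{DdefineCDE} gives directly $c_k^{+}=d_k^{-}=1$ and $c_k^{-}=d_k^{+}=e_k^{+}=e_k^{-}=0$.
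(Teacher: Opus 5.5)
Your proof is correct and is essentially the paper's argument. The paper likewise uses normality of $A_k$ to conclude that the two rank-one eigenprojectors $v_k^{\pm}(v_k^{\pm})^{H}$ sum to $I_2$, reads off the entries, and checks the case $\chi_k(S)=0$ directly from the convention in Lemma~\ref{Specsemi-Cayley}(b). Your version is slightly more careful: you verify explicitly that $A_k$ is skew-Hermitian, and you treat a possible repeated eigenvalue, which the paper does not. Since $(l_k-r_k)^2\le 0$, a repeated eigenvalue can only occur when $\chi_k(S)=0$ and $r_k=l_k$, which is exactly where the convention fixes $v_k^{\pm}$.
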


\begin{proof}
Notice that $v_{k}^{\pm}=(a_{k}^{\pm},b_{k}^{\pm})^{\mathsf{T}}$ are the orthogonal eigenvectors of $A_{k}$ corresponding to $\lambda_{k}^{\pm}$. Then $A_{k}$ has the  spectral decomposition
\begin{equation}\label{Akspectral}
A_{k}=\sum_{\pm}\lambda_{k}^{\pm}
\begin{pmatrix}
\left|a_{k}^{\pm}\right|^{2}  & a_{k}^{\pm}\overline{b_{k}^{\pm}} \\[0.2cm]
 b_{k}^{\pm}\overline{a_{k}^{\pm}} & \left|b_{k}^{\pm}\right|^{2}
 \end{pmatrix}
=\sum_{\pm}\lambda_{k}^{\pm}
\begin{pmatrix}
c_{k}^{\pm}  & e_{k}^{\pm}  \\[0.2cm]
\overline{e_{k}^{\pm}}  & d_{k}^{\pm}
 \end{pmatrix}.
\end{equation}
Since $A_{k}$ is a normal matrix, the sum of all eigenprojectors of $A_{k}$ is equal to the identity matrix. It follows from \eqref{Akspectral} that $c_{k}^{+}+c_{k}^{-}=d_{k}^{+}+d_{k}^{-}=1,~ e_{k}^{+}+e_{k}^{-}=0$.

If $\chi_{k}(S)=0$, by Lemma \ref{Specsemi-Cayley} (b) and \eqref{DdefineCDE}, one can easily verify the result.

This completes the proof.
\qed
\end{proof}

\begin{theorem}\label{SemiA}
Let $\Gamma=\mathrm{OSC}(G,R,L,S)$ be an oriented semi-Cayley graph over an abelian group of order $n$. Then $\Gamma$ does not admit $(\alpha,\beta)$-FR with $\alpha,\beta\neq0$ from $(g,r)$ to $(h,r)$, for $r\in\{0,1\}$.
\end{theorem}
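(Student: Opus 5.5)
The plan is to mimic the proof of Theorem \ref{AnonFR }, using the translation invariance of $U(t)$ within each side of the semi-Cayley graph. By \eqref{UENTRY}, for fixed $r\in\{0,1\}$ the entry $U(t)_{(x,r),(y,r)}$ depends only on $xy^{-1}$. Write the target as $h=gv$ with $v\neq e$ and take $r=0$; the case $r=1$ is identical with $d_k^\pm$ in place of $c_k^\pm$. Then:
\begin{itemize}
\item the diagonal entries $U(t)_{(x,0),(x,0)}$ all coincide;
\item $U(t)_{(gv,0),(g,0)}=U(t)_{(gv^2,0),(gv,0)}$;
\item $U(t)_{(g,0),(gv,0)}=U(t)_{(gv,0),(gv^2,0)}$.
\end{itemize}

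First, suppose $(\alpha,\beta)$-FR occurs from $(g,0)$ to $(gv,0)$ at time $t$, so $U(t)\mathbf e_{(g,0)}=\alpha\mathbf e_{(g,0)}+\beta\mathbf e_{(gv,0)}$. By the translation invariance, $U(t)_{(gv,0),(gv,0)}=\alpha$ and $U(t)_{(gv^2,0),(gv,0)}=\beta$.

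Next, I obtain the entry $U(t)_{(g,0),(gv,0)}=-\beta$. I will not route this through strong cospectrality. Instead I use column orthogonality of the unitary $U(t)$ directly, as in the proof of Theorem \ref{FR=PST}, which only needs equality of the diagonal entries at $(g,0)$ and $(gv,0)$. Since $U(t)\mathbf e_{(g,0)}$ is supported on $\{(g,0),(gv,0)\}$ and is orthogonal to $U(t)\mathbf e_{(gv,0)}$, we get $\alpha U(t)_{(g,0),(gv,0)}+\beta\alpha=0$. Hence $U(t)_{(g,0),(gv,0)}=-\beta$, using $\alpha\neq0$ and that $U(t)$ is real. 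Translation invariance then gives $U(t)_{(gv,0),(gv^2,0)}=-\beta$.

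Now consider the row of $U(t)$ indexed by $(gv,0)$. Its entries in columns $(g,0)$, $(gv,0)$ and $(gv^2,0)$ are $\beta$, $\alpha$ and $-\beta$ respectively. If $gv^2\neq g$, these three columns are distinct, so the row has squared norm at least $\alpha^2+2\beta^2>1$. This contradicts unitarity, since rows of a unitary matrix have norm one.

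Hence $v^2=e$, so $gv^2=g$. The entry $U(t)_{(gv,0),(g,0)}$ then equals both $\beta$ (from the FR) and $U(t)_{(gv,0),(gv^2,0)}=-\beta$. This forces $\beta=0$, a contradiction.

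The only delicate point is the translation invariance of the same-side entries, which follows directly from \eqref{UENTRY} since the coefficient depends only on $gh^{-1}$. Everything else is routine.
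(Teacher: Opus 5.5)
Your proof is correct and follows essentially the same route as the paper: same-side translation invariance from \eqref{UENTRY}, the orthogonality argument of Theorem~\ref{FR=PST} to obtain the $-\beta$ entry, and a three-entry norm count that forces $v^2=e$ and hence $\beta=0$. The differences are cosmetic. You bound the norm of a row of $U(t)$ rather than the column indexed by $(h,r)$. You also derive the $-\beta$ entry directly from equal diagonal entries, whereas the paper invokes cospectrality and then Theorem~\ref{FR=PST}.
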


\begin{proof}
Assume that $\Gamma$ admits an $(\alpha,\beta)$-FR with $\alpha,\beta\neq0$ from $(g,r)$ to $(h,r)$. From \eqref{EX} and \eqref{EigenSEMI}, we see that $(g,r)$ and $(h,r)$ are cospectral. Applying Theorem~\ref{FR=PST}, we obtain
\begin{equation}\label{rh}
U(t)_{(g,r),(g,r)}=U(t)_{(h,r),(h,r)}=\alpha,~~~U(t)_{(h,r),(g,r)}=-U(t)_{(g,r),(h,r)}=\beta.
\end{equation}
By \eqref{UENTRY} and \eqref{rh}, for every $k\in G$, we have
\begin{equation}\label{*}
U(t)_{(gk,r),(gk,r)}=\alpha,~~~U(t)_{(hk,r),(gk,r)}=\beta.
\end{equation}
Taking $k=g^{-1}h$ in $\eqref{*}$, we have
\begin{equation}\label{hh}
U(t)_{(h,r),(h,r)}=\alpha,~~~U(t)_{(g^{-1}h^2,r),(h,r)}=\beta.
\end{equation}

Assume that $g^{-1}h^{2}\neq g$. Then $(g,r)$, $(h,r)$ and $(g^{-1}h^{2},r)$ are three distinct vertices. From \eqref{rh} and \eqref{hh}, we have
$$
\left |  U(t)_{(g,r),(h,r)} \right | ^2+\left | U(t)_{(h,r),(h,r)} \right | ^2+\left |  U(t)_{(g^{-1}h^2,r),(h,r)}\right | ^2=\alpha^2+2\beta^2>1,
$$
since $\beta\neq0$ and $\alpha^{2}+\beta^{2}=1$. This contradicts the fact that $U(t)$ is unitary. Thus we must have $g^{-1}h^{2}=g$. Substituting this into \eqref{hh} and comparing with \eqref{rh}, we obtain
$$
U(t)_{(g,r),(h,r)}=-\beta=U(t)_{(g^{-1}h^2,r),(h,r)}=\beta,
$$
which forces $\beta=0$, contradicting the assumption $\beta\neq0$.

This completes the proof.
\qed
\end{proof}

\begin{theorem}\label{FRmatrix}
Let $\Gamma=\mathrm{OSC}(G,R,L,S)$ be an oriented semi-Cayley graph over an abelian group of order $n$. If $\Gamma$ admits $(\alpha,\beta)$-FR with $\alpha,\beta\neq0$ from $(g,0)$ to $(h,1)$ at time $t$, then the transition matrix $U(t)$ can be written in the block form
\begin{equation}\label{UT0}
U(t)=\begin{pmatrix}
 \alpha I & -\beta Q^{\mathsf{T}}W \\
 \beta Q &  \alpha W
\end{pmatrix},
\end{equation}
where $Q$ is a permutation matrix, and $W$ is an $n\times n$ orthogonal matrix.
\end{theorem}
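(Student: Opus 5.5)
The plan is to read off the first block column of $U(t)$ directly from the FR condition, using the $G$-invariance of $U(t)$ visible in \eqref{UENTRY}. The second block column will then be forced by the fact that $U(t)$ is a real orthogonal matrix.

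\textbf{Step 1: the first block column.} By \eqref{UENTRY}, each entry $U(t)_{(a,r),(b,s)}$ depends only on $r,s$ and on $ab^{-1}$. Hence
$$
U(t)_{(ak,r),(bk,s)}=U(t)_{(a,r),(b,s)}\qquad\text{for all } k\in G.
$$
The hypothesis says that column $(g,0)$ of $U(t)$ equals $\alpha\mathbf e_{(g,0)}+\beta\mathbf e_{(h,1)}$. By the invariance above, for every $k\in G$, column $(gk,0)$ equals $\alpha\mathbf e_{(gk,0)}+\beta\mathbf e_{(hk,1)}$. Therefore, in the block form indexed by $G\times\{0\}$ and $G\times\{1\}$:
\begin{itemize}
\item the upper-left block is $\alpha I$;
\item the lower-left block is $\beta Q$, where $Q$ is the permutation matrix with $Q_{hk,gk}=1$ for all $k\in G$, i.e.\ $Q$ is the matrix of translation by $g^{-1}h$.
\end{itemize}
In particular $Q$ is orthogonal, so $QQ^{\mathsf T}=Q^{\mathsf T}Q=I$.

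\textbf{Step 2: the second block column.} Write the second block column as $\begin{pmatrix}Y\\ Z\end{pmatrix}$. The matrix $U(t)=\exp(A_\Gamma t)$ is real, because $A_\Gamma$ is real, and it is unitary, so it is orthogonal. Orthogonality between the two block columns gives
$$
(\alpha I)^{\mathsf T}Y+(\beta Q)^{\mathsf T}Z=0,
\qquad\text{that is,}\qquad
\alpha Y=-\beta Q^{\mathsf T}Z .
$$
Since $\alpha\neq0$, set $W=Z/\alpha$. Then $Z=\alpha W$ and $Y=-\beta Q^{\mathsf T}W$, which is exactly the form \eqref{UT0}.

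\textbf{Step 3: $W$ is orthogonal.} The columns of the second block column are orthonormal, so $Y^{\mathsf T}Y+Z^{\mathsf T}Z=I$. Substituting the expressions from Step 2 gives
$$
\beta^2W^{\mathsf T}QQ^{\mathsf T}W+\alpha^2W^{\mathsf T}W=(\alpha^2+\beta^2)W^{\mathsf T}W=W^{\mathsf T}W=I.
$$
Since $W$ is square, $W^{\mathsf T}W=I$ implies $W$ is orthogonal, and $W$ is real because $U(t)$ is real.

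\textbf{Main obstacle.} The only real point is justifying the translation invariance in Step 1, including for the mixed blocks with $r\neq s$. Each case of \eqref{UENTRY} is a function of $gh^{-1}$ alone, so this is immediate, and it is the key input that makes the whole first block column determined by a single FR column. The rest is linear algebra on orthogonal matrices; nonvanishing of $\alpha$ is used exactly once, to define $W$.
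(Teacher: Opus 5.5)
Your proposal is correct and follows essentially the same route as the paper. Both arguments use the $G$-translation invariance from \eqref{UENTRY} to fix the first block column as $\alpha I$ over $\beta Q$, and then use real orthogonality of $U(t)$ to force the second block column to be $-\beta Q^{\mathsf T}W$ over $\alpha W$. The only cosmetic difference is that you set $W=Z/\alpha$ before checking $W^{\mathsf T}W=I$, whereas the paper first derives $(W')^{\mathsf T}W'=\alpha^2 I$ and then rescales.
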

\begin{proof}
Assume that $\Gamma$ has $(\alpha,\beta)$-FR  with $\alpha,\beta\neq0$ from $(g,0)$ to $(h,1)$ at time $t$. Then
\begin{equation}\label{FRsingle}
U(t) \mathbf{e}_{(g,0)} = \alpha \mathbf{e}_{(g,0)} + \beta \mathbf{e}_{(h,\,1)}.
\end{equation}
Combining \eqref{UENTRY} with \eqref{FRsingle}, for every $z\in G$, we get
$$
U(t) \mathbf{e}_{(z,0)} = \alpha \mathbf{e}_{(z,0)} + \beta \mathbf{e}_{(g^{-1}h z,\,1)}.
$$
Arrange the rows of $U(t)$ such that the first $n$ rows correspond to vertices $\{(x,0)\mid x\in G\}$ and the last $n$ rows to $\{(x,1)\mid x\in G\}$.
From the above description we immediately obtain the two left blocks of $U(t)$:
$$
\text{upper-left block} = \alpha I_n,\qquad
\text{lower-left block} = \beta Q,
$$
where $Q$ is the $n\times n$ permutation matrix defined by $Q_{x,z}=1$ if $x=g^{-1}hz$, and 0 otherwise.

Let
\begin{equation}\label{UT1}
U(t) =
\begin{pmatrix}
\alpha I_n & A' \\
\beta Q & W'
\end{pmatrix},
\end{equation}
where $A',W'\in\mathbb{R}^{n\times n}$. Recall that $U(t)$ is real orthogonal. Then
\begin{align*}
\begin{pmatrix}
\alpha I_n & \beta Q^{\mathsf{T}} \\
(A')^{\mathsf{T}} & (W')^{\mathsf{T}}
\end{pmatrix}
\begin{pmatrix}
\alpha I_n & A' \\
\beta Q & W'
\end{pmatrix}
&=
\begin{pmatrix}
\alpha^2 I_n + \beta^2 Q^{\mathsf{T}}Q & \alpha A' + \beta Q^{\mathsf{T}}W' \\
\alpha (A')^{\mathsf{T}} + \beta (W')^{\mathsf{T}}Q & (A')^{\mathsf{T}}A' + (W')^{\mathsf{T}}W'
\end{pmatrix} = I_{2n}.
\end{align*}
Thus, we have
\begin{align}
\alpha A' + \beta Q^{\mathsf{T}} W' &= 0,\label{U01}\\
(A')^{\mathsf{T}}A' + (W')^{\mathsf{T}}W' &= I_n.\label{U11}
\end{align}
Rewrite \eqref{U01} as $A' = -\frac{\beta}{\alpha}Q^{\mathsf{T}}W'$. Substituting this into \eqref{U11} yields
$$
\frac{\beta^2}{\alpha^2} (W')^{\mathsf{T}}QQ^{\mathsf{T}}W' + (W')^{\mathsf{T}}W' = I_n .
$$
Recall that $QQ^{\mathsf{T}} = I_n$. Hence, we have $(W')^{\mathsf{T}}W' = \alpha^2 I_n$, which shows that $\frac{1}{\alpha}W'$ is an orthogonal matrix. Set $W = \frac{1}{\alpha}W'$. Then $W' = \alpha W$ and $A' = -\beta Q^{\mathsf{T}} W$. Therefore, $U(t)$ has exactly the block form given in \eqref{UT0}.
\qed
\end{proof}

Let $G$ be a finite abelian group of order $n$ with irreducible characters set $\widehat{G}$. For any function $f: G \to \mathbb{C}$, its \emph{Fourier transform} $\widehat{f}: \widehat{G} \to \mathbb{C}$ is defined by
$$
\widehat{f}(\chi) = \sum_{g \in G} f(g) \overline{\chi(g)}, \quad \chi \in \widehat{G}.
$$
The \emph{Fourier inversion} \cite{BFourier} is given by
$$
f(g) = \frac{1}{n} \sum_{\chi \in \widehat{G}} \widehat{f}(\chi) \chi(g), \quad g \in G.
$$

\begin{theorem}\label{NONCOSPECTRAL}
Let $\Gamma=\mathrm{OSC}(G,R,L,S)$ be an oriented semi-Cayley graph over an abelian group of order $n$. Let
$$
X=\{k\in G: \chi_{k}(S)=0\}.
$$
Then $\Gamma$ admits $(\alpha,\beta)$-FR with $\alpha,\beta\neq0$ from $u=(g,0)$ to $v=(h,1)$ at time $t$ if and only if all the following conditions hold:
\begin{itemize}
\item[\rm(a)] $X=\emptyset$;
 \item[\rm(b)] Set $a=g^{-1}h$. For every $k\in G$,
  \begin{align*}
    \exp\left(t\lambda_k^+\right) & = \alpha + \beta c_k^{-}\left(\overline{e_k^+}\right)^{-1} \overline{\chi_k(a)},\\[0.2cm]
    \exp\left(t\lambda_k^-\right) & = \alpha - \beta c_k^{+}\left(\overline{e_k^+}\right)^{-1} \overline{\chi_k(a)},
  \end{align*}
  where $c_k^\pm, e_k^+$ are defined in \eqref{DdefineCDE}.
\end{itemize}
\end{theorem}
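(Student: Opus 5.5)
The plan is to turn the FR condition, which is a statement about one column of $U(t)$, into a family of scalar identities, one for each $k\in G$, by applying Fourier inversion to the entry formulas \eqref{UENTRY}. Abbreviate $E_k^{\pm}=\exp(t\lambda_k^{\pm})$ and $a=g^{-1}h$. The FR condition $U(t)\mathbf e_{(g,0)}=\alpha\mathbf e_{(g,0)}+\beta\mathbf e_{(h,1)}$ is equivalent to the following two requirements on the $(g,0)$-column:
\begin{itemize}
\item $U(t)_{(x,0),(g,0)}=\alpha\,\delta_{x,g}$ for all $x\in G$;
\item $U(t)_{(y,1),(g,0)}=\beta\,\delta_{y,h}$ for all $y\in G$.
\end{itemize}

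By \eqref{UENTRY}, the first family of entries equals $\frac1n\sum_k(c_k^+E_k^++c_k^-E_k^-)\chi_k(xg^{-1})$. Its target is $\alpha\delta_{x,g}=\frac1n\sum_k\alpha\,\chi_k(xg^{-1})$. The second family equals $\frac1n\sum_k(\overline{e_k^+}E_k^++\overline{e_k^-}E_k^-)\chi_k(yg^{-1})$, with target
$$
\beta\delta_{y,h}=\frac1n\sum_k\beta\,\overline{\chi_k(a)}\,\chi_k(yg^{-1}).
$$
Since the characters $\chi_k$ are linearly independent (orthogonality \eqref{OR}, equivalently uniqueness in Fourier inversion), the FR condition holds if and only if, for every $k\in G$,
$$
c_k^+E_k^++c_k^-E_k^-=\alpha,\qquad \overline{e_k^+}E_k^++\overline{e_k^-}E_k^-=\beta\,\overline{\chi_k(a)}.
$$
By Lemma \ref{CPMsum} we have $c_k^-=1-c_k^+$ and $e_k^-=-e_k^+$. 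The system therefore becomes
$$
E_k^+-c_k^-(E_k^+-E_k^-)=\alpha,\qquad \overline{e_k^+}\,(E_k^+-E_k^-)=\beta\,\overline{\chi_k(a)}.
$$

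For necessity, first suppose $k\in X$. Lemma \ref{CPMsum} gives $e_k^\pm=0$, so the second equation reads $0=\beta\overline{\chi_k(a)}$. This is impossible, because $|\chi_k(a)|=1$ and $\beta\neq0$; hence (a) holds. Now take $k\notin X$, so $\chi_k(S)\neq0$. I will check that $e_k^+=a_k^+\overline{b_k^+}\neq0$. If $b_k^+=0$, the second row of $A_k v_k^+=\lambda_k^+v_k^+$ gives $-\overline{\chi_k(S)}a_k^+=0$, so $a_k^+=0$, contradicting normalization. Symmetrically, $a_k^+=0$ forces $b_k^+=0$. With $e_k^+\neq0$ I can solve the second equation for $E_k^+-E_k^-=\beta\overline{\chi_k(a)}/\overline{e_k^+}$. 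Substituting into the first gives $E_k^+=\alpha+\beta c_k^-(\overline{e_k^+})^{-1}\overline{\chi_k(a)}$. Then $E_k^-=E_k^+-(E_k^+-E_k^-)$ yields $E_k^-=\alpha-\beta c_k^+(\overline{e_k^+})^{-1}\overline{\chi_k(a)}$, which is (b).

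For sufficiency, assume (a) and (b). Then $e_k^+$ is defined and nonzero for every $k$, and a direct substitution shows that (b) satisfies both per-$k$ equations. Reading the Fourier-inversion equivalence backwards recovers the required column, so FR occurs at time $t$.

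The main point needing care is that the per-$k$ identities do not depend on the choice of eigenvectors when $\lambda_k^+=\lambda_k^-$. In that case \eqref{USEMI} still holds for any orthonormal eigenbasis of $A_k$, so the argument is unaffected. The other delicate step is the nonvanishing of $e_k^+$ off $X$, handled above.
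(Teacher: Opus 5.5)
Your proposal is correct and follows the paper's proof essentially step for step. You apply Fourier inversion to the $(g,0)$-column to get the per-$k$ identities \eqref{cz}--\eqref{ez}, obtain the contradiction $0=\beta\overline{\chi_k(a)}$ for $k\in X$, solve using Lemma \ref{CPMsum}, and get sufficiency by substitution. Your explicit check that $e_k^+=a_k^+\overline{b_k^+}\neq0$ whenever $\chi_k(S)\neq0$ is a small improvement, since the paper asserts this without justification.
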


\begin{proof}
We first prove the necessity. Assume that $\Gamma$ has $(\alpha,\beta )$-FR with $\alpha,\beta\neq0$ from $u=(g,0)$ to $v=(h,1)$ at time $t$. Then
\begin{equation}\label{uuv}
U(t)\mathbf{e}_u = \alpha\mathbf{e}_u + \beta\mathbf{e}_v.
\end{equation}
Applying \eqref{UENTRY} to \eqref{uuv}, we have the following identities:
\begin{itemize}
  \item For $w=(f,0)$,
$$
U(t)_{w,u}=\frac{1}{n}\sum_{k\in G}\left(c_{k}^{+}\exp(t\lambda_{k}^{+})+c_{k}^{-}\exp(t\lambda_{k}^{-})\right)\chi_{k}(fg^{-1})
=\left\{\begin{array}{cl}
          \alpha, & \text{if $w=u$,} \\[0.2cm]
          0,& \text{otherwise.}
        \end{array}\right.
$$
\item For $w=(f,1)$,
$$
U(t)_{w,u}=\frac{1}{n}\sum_{k\in G}\left(\overline{e_{k}^{+}} \exp \left(t \lambda_{k}^{+}\right)+\overline{e_{k}^{-}} \exp \left(t \lambda_{k}^{-}\right)\right)\chi_{k}(fg^{-1})
=\left\{\begin{array}{cl}
          \beta, & \text{if $w=v$,} \\[0.2cm]
          0,& \text{otherwise.}
        \end{array}\right.
$$
\end{itemize}

According to the Fourier inversion, for every $k\in G$,
\begin{equation}\label{cz}
c_{k}^{+}\exp(t\lambda_{k}^{+})+c_{k}^{-}\exp(t\lambda_{k}^{-})=\alpha,
\end{equation}
\begin{equation}\label{ez}
\overline{e_{k}^{+}} \exp \left(t \lambda_{k}^{+}\right)+\overline{e_{k}^{-}} \exp \left(t \lambda_{k}^{-}\right)=\beta\overline{\chi_k(a)}.
\end{equation}

If $X\neq\emptyset$, then $\exists~k\in X$ such that $\chi_{k}(S)=0$. Lemma~\ref{CPMsum} gives that $e_{k}^{+}=e_{k}^{-}=0$. Substituting this into \eqref{ez}, we obtain $\beta \overline{\chi_k(a)}=0$, contradicting $\beta\neq0$. Hence $X=\emptyset$ and $e_{k}^{+}\neq0$. From Lemma \ref{CPMsum}, we obtain $e_{k}^{-}=-e_{k}^{+}$. Applying this to \eqref{ez} gives that
\begin{equation}\label{lampm}
\exp(t\lambda_{k}^{+})-\exp(t\lambda_{k}^{-}) = \left(\overline{e_{k}^{+}}\right)^{-1}\beta\overline{\chi_k(a)}.
\end{equation}
Recall from Lemma~\ref{CPMsum} that $c_{k}^{+}+c_{k}^{-}=1$. By \eqref{cz} and \eqref{lampm}, we get
$$
\exp(t\lambda_{k}^{+})=\alpha +c_{k}^{-}\left(\overline{e_{k}^{+}}\right)^{-1}\beta \overline{\chi_k(a)}~~~\text{and}~~~
\exp(t\lambda_{k}^{-})=\alpha -c_{k}^{+}\left(\overline{e_{k}^{+}}\right)^{-1}\beta \overline{\chi_k(a)}.
$$

Next, we show the sufficiency. For any $w=(f,0)$, \eqref{UENTRY} gives
$$
U(t)_{w,u}
= \frac1n\sum_{k\in G}\bigl(c_{k}^{+}\exp(t\lambda_{k}^{+})+c_{k}^{-}\exp(t\lambda_{k}^{-})\bigr)\chi_{k}(fg^{-1}).
$$
Using (b) and $c_{k}^{+}+c_{k}^{-}=1$, it follows from \eqref{CHIsum} that
\begin{equation}\label{UW0}
U(t)_{w,u}= \frac{\alpha}{n}\sum_{k\in G}\chi_{k}(fg^{-1})
= \begin{cases}
\alpha, & \text{if } w=u,\\[0.2cm]
0, & \text{otherwise}.
\end{cases}
\end{equation}

Now let $w=(f,1)$. Recall from Lemma \ref{CPMsum} that $e_k^- = -e_k^+$. By \eqref{UENTRY} and (b), we have
\begin{align*}
U(t)_{w,u}
&=\frac{1}{n} \sum_{k \in G}\left(\overline{e_{k}^{+}} \exp \left(t \lambda_{k}^{+}\right)+\overline{e_{k}^{-}} \exp \left(t \lambda_{k}^{-}\right)\right) \chi_{k}\left(fg^{-1}\right)\\
&=\frac{1}{n} \sum_{k \in G} \overline{e_{k}^{+}}\left(\exp \left(t \lambda_{k}^{+}\right)-\exp \left(t \lambda_{k}^{-}\right)\right) \chi_{k}\left(fg^{-1}\right)\\
&=\frac{\beta}{n}\sum_{k\in G} \chi_{k}(fg^{-1}) \overline{\chi_k(a)}.
\end{align*}
By \eqref{OR}, we have
\begin{equation}\label{UW1}
U(t)_{w,u}=\left\{\begin{array}{cl}
          \beta, & \text{if $w=v$,} \\[0.2cm]
          0,& \text{otherwise.}
        \end{array}\right.
\end{equation}

Combining \eqref{UW0} and \eqref{UW1}, we have $U(t)\mathbf{e}_u = \alpha\mathbf{e}_u + \beta\mathbf{e}_v$. Thus, $\Gamma$ admits $(\alpha,\beta )$-FR from $u=(g,0)$ to $v=(h,1)$ at time $t$.

This completes the proof.
\qed
\end{proof}

\begin{theorem}\label{SemiABcospectral}
Let $\Gamma=\mathrm{OSC}(G,R,L,S)$ be an oriented semi-Cayley graph over an abelian group of order $n$. Let
$$
X=\{k\in G: \chi_{k}(S)=0\}.
$$
Then $(g,0)$ and $(h,1)$ are strongly cospectral in $\Gamma$ if and only if all the following conditions hold:
\begin{itemize}
\item[\rm(a)] $X=\emptyset$;
\item[\rm(b)] For any $k$, $r_k=l_k$, where $r_{k}, l_{k}$ are as in Lemma \ref{Specsemi-Cayley};
\item[\rm(c)] For any $k_1,k_2\in G$ and $\sigma,\tau\in\{+,-\}$, if $\lambda_{k_1}^{\sigma}=\lambda_{k_2}^{\tau}$, then
$$
\sigma \frac{\overline{\chi_{k_1}(S)}}{|\chi_{k_1}(S)|}\chi_{k_1}(g^{-1}h)
=\tau \frac{\overline{\chi_{k_2}(S)}}{|\chi_{k_2}(S)|}\chi_{k_2}(g^{-1}h).
$$
\end{itemize}
\end{theorem}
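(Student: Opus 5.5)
Write $a=g^{-1}h$. The plan is to compute $E\mathbf e_{(g,0)}$ and $E\mathbf e_{(h,1)}$ explicitly from \eqref{EigenSEMI} and compare them eigenvalue by eigenvalue. By \eqref{EX}, the $g'$-entry of $\widetilde E_k\mathbf e_g$ is $\frac1n\chi_k(g'g^{-1})$. Hence
$$\widetilde E_k\mathbf e_h=\overline{\chi_k(a)}\,\widetilde E_k\mathbf e_g,\qquad\text{equivalently}\qquad \widetilde E_k\mathbf e_g=\chi_k(a)\,\widetilde E_k\mathbf e_h.$$
Substituting into \eqref{EigenSEMI} gives
$$E^{\pm}_{\lambda_k}\mathbf e_{(g,0)}=(a_k^\pm,b_k^\pm)^{\mathsf T}\overline{a_k^\pm}\otimes\widetilde E_k\mathbf e_g,\qquad E^{\pm}_{\lambda_k}\mathbf e_{(h,1)}=\overline{\chi_k(a)}\,(a_k^\pm,b_k^\pm)^{\mathsf T}\overline{b_k^\pm}\otimes\widetilde E_k\mathbf e_g.$$
Since $\widetilde E_k\mathbf e_g\neq0$, the per-block relation $E^{\pm}_{\lambda_k}\mathbf e_{(g,0)}=\gamma E^{\pm}_{\lambda_k}\mathbf e_{(h,1)}$ is equivalent to
$$\overline{a_k^\pm}=\gamma\,\overline{\chi_k(a)}\,\overline{b_k^\pm}.$$

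For necessity, first note that strong cospectrality implies cospectrality by Lemma \ref{Cospectral-111}, so $|a_k^\pm|=|b_k^\pm|$, i.e. $c_k^\pm=d_k^\pm$. If $\chi_k(S)=0$, then by \eqref{chik0} we have $a_k^+=1$ and $b_k^+=0$, which violates $|a_k^+|=|b_k^+|$; this gives (a). With $\chi_k(S)\ne0$, each eigenvector satisfies $|a|=|b|=1/\sqrt2$. The first row of $A_k v=\lambda v$ gives $(r_k-\lambda)a+\chi_k(S)b=0$, and the second row gives $-\overline{\chi_k(S)}a+(l_k-\lambda)b=0$. From $|a|=|b|$ we get $|r_k-\lambda|=|\chi_k(S)|=|l_k-\lambda|$. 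Since $r_k,l_k,\lambda$ are all purely imaginary, either $r_k=l_k$ or $\lambda=(r_k+l_k)/2$. If the latter held for both $\lambda_k^\pm$, the two eigenvalues would coincide, which forces $(l_k-r_k)^2=4|\chi_k(S)|^2$ with the left side $\le0$; that contradicts $\chi_k(S)\neq0$. This gives (b). I expect this step to be the main obstacle: it needs care in the degenerate case $\lambda_k^+=\lambda_k^-$, and the sign convention in \eqref{lam+-} has to be used. Under (b), $\lambda_k^\pm=r_k\pm\mathrm i|\chi_k(S)|$. Solving the first row then gives $b/a=\pm\mathrm i|\chi_k(S)|/\chi_k(S)$, so $a/b=\mp\mathrm i\,\chi_k(S)/|\chi_k(S)|$.

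For (c), conjugate the relation above to get the phase factor explicitly:
$$\gamma=\overline{a_k^\pm}\,\big/\,\big(\overline{\chi_k(a)}\,\overline{b_k^\pm}\big)=\pm\mathrm i\,\frac{\overline{\chi_k(S)}}{|\chi_k(S)|}\,\chi_k(a).$$
Thus $e^{\mathrm i\pi q}$ for the eigenvalue $\lambda_k^\sigma$ is $\sigma\mathrm i\,\frac{\overline{\chi_k(S)}}{|\chi_k(S)|}\chi_k(g^{-1}h)$. Condition \eqref{laml=lam2} requires these phases to agree whenever $\lambda_{k_1}^\sigma=\lambda_{k_2}^\tau$, and after cancelling $\mathrm i$ this is exactly (c).

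For sufficiency, assume (a)–(c). By (a) and (b) the blocks are $A_k=r_kI+\begin{pmatrix}0&\chi_k(S)\\-\overline{\chi_k(S)}&0\end{pmatrix}$, whose normalized eigenvectors have $|a|=|b|=1/\sqrt2$ and ratio computed above. The projector relation then holds with the stated phase for every $(k,\pm)$. It remains to pass from the individual projectors $E^\pm_{\lambda_k}$ to the phase condition. Equal eigenvalues may come from several pairs $(k,\sigma)$. Since the paper's definition works with the rank-one projectors $E_{\lambda_r}$ and only demands that equal eigenvalues carry equal phases, condition (c) supplies exactly that. If one instead uses full eigenspace projectors, summing the rank-one projectors with the common phase preserves the relation. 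This proves the equivalence.
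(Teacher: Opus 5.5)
Your proposal is correct and follows essentially the same route as the paper. You compute $E^{\pm}_{\lambda_k}\mathbf e_{(g,0)}$ and $E^{\pm}_{\lambda_k}\mathbf e_{(h,1)}$ from the Kronecker form, get (a) from the $\chi_k(S)=0$ convention, get (b) from $|a_k^{\pm}|=|b_k^{\pm}|$ plus the eigenvector equation, read off (c) from the explicit phase $\pm\mathrm{i}\,\overline{\chi_k(S)}\chi_k(g^{-1}h)/|\chi_k(S)|$, and prove sufficiency by reversing the computation. The only differences are cosmetic: you obtain $|a_k^{\pm}|=|b_k^{\pm}|$ via cospectrality (Lemma~\ref{Cospectral-111}) and deduce $r_k=l_k$ from the equidistance of $r_k,l_k$ to $\lambda$ on the imaginary axis, whereas the paper uses the phase relation directly and the explicit eigenvalue formula.
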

\begin{proof} We first prove the necessity. Let  $u=(g,0)$ and $v=(h,1)$. If $X\neq\emptyset$, then we take $k\in G$ such that $\chi_{k}(S)=0$. By Lemma \ref{Specsemi-Cayley} (b), the corresponding eigenprojectors are given by \eqref{chik0}. By \eqref{chik0}, we have
$$
E_{\lambda_k}^{+}\mathbf e_u\neq 0,\qquad
E_{\lambda_k}^{+}\mathbf e_v=0,
$$
Hence, there does not exist a phase factor $\exp{\left(\mathrm{i}\pi q_{k}^{+}(u,v)\right)}$ such that
$$
E_{\lambda_{k}}^{+}\mathbf e_{u}=\exp{\left(\mathrm{i}\pi q_{k}^{+}(u,v)\right)}E_{\lambda_{k}}^{+}\mathbf e_{v}.
$$
Therefore, $u$ and $v$ are not strongly cospectral. This proves (a).

Let $(a_{k}^{\pm},b_{k}^{\pm})^{\mathsf{T}}$ be the normalized eigenvectors of $A_{k}$ corresponding to $\lambda_{k}^{\pm}$. By \eqref{EX} and \eqref{EigenSEMI}, for any $k$, we have
\begin{equation}\label{EAB01}
E_{\lambda_{k}}^{\pm}\mathbf e_{u}=\overline{ \left(a_{k}^{\pm}/b_{k}^{\pm}\right) } \chi_{k}(g^{-1}h) E_{\lambda_{k}}^{\pm}\mathbf e_{v}.
\end{equation}
Notice that $u$ and $v$ are strongly cospectral. Denote the phase factor associated with $\lambda_{k}^{\pm}$ by $\exp{\left(\mathrm{i}\pi q_{k}^{\pm}(u,v)\right)}$. Combining \eqref{Scospectral} with \eqref{EAB01}, we get
\begin{equation}\label{abchi11}
\left|\overline{ \left(a_{k}^{\pm}/b_{k}^{\pm}\right) } \chi_{k}(g^{-1}h)\right| =\left|\exp{\left(\mathrm{i}\pi q_{k}^{\pm}(u,v)\right)}\right|=1.
\end{equation}
 By \eqref{Ai}, we get
$$
\begin{pmatrix}
  r_k&   \chi_{k}(S)\\[0.2cm]
  -\overline{\chi_{k}(S)}  &  l_k
\end{pmatrix}
\begin{pmatrix}
  a_{k}^{\pm}\\[0.2cm]
 b_{k}^{\pm}
\end{pmatrix}=
\begin{pmatrix}
 r_k a_{k}^{\pm}+\chi_{k}(S)b_{k}^{\pm}\\[0.2cm]
   -\overline{\chi_{k}(S)}a_{k}^{\pm}+l_k b_{k}^{\pm}
\end{pmatrix}=
\lambda_{k}^{\pm}
\begin{pmatrix}
  a_{k}^{\pm}\\[0.2cm]
 b_{k}^{\pm}
\end{pmatrix}.
$$
Then
\begin{equation}\label{aPM}
a_{k}^{\pm}=\frac{\chi_{k}(S)}{\lambda_{k}^{\pm}-r_{k}}b_{k}^{\pm}.
\end{equation}
Note that $\left | \chi_{k}(g^{-1}h) \right | =1$. By \eqref{lam+-}, \eqref{abchi11} and \eqref{aPM}, we have
\begin{equation}\label{l-r}
\left |l_{k}-r_k+\mathrm{i}\left |\sqrt{(l_{k}-r_k)^2-4\left | \chi_{k}(S) \right |^2 }  \right | \right |
=\left |l_{k}-r_k-\mathrm{i}\left |\sqrt{(l_{k}-r_k)^2-4\left | \chi_{k}(S) \right |^2 }  \right | \right |
=\left |2\chi_{k}(S) \right |.
\end{equation}
Note that $r_{k}$ and $l_{k}$ are the eigenvalues of oriented graph $\mathrm{Cay}(G, R)$ and $\mathrm{Cay}(G, L)$, respectively. Then $l_k - r_k$ is either zero or a nonzero purely imaginary number. Thus, $(l_k - r_k)^2\le 0$. Notice that $\left | \chi_{k}(S) \right |^2>0$ since $X=\emptyset$. Then
$$
(l_{k}-r_k)^2-4\left | \chi_{k}(S) \right |^2<0.
$$
Hence, by \eqref{l-r}, we have $l_k - r_k=0$, that is, $r_{k}=l_{k}$, yielding (b).

Note that (a) and (b) imply $\chi_{k}(S)\neq 0$, $r_k=l_k$ for any $k\in G$. It follows from \eqref{lam+-} that
$$
\lambda_{k}^{\pm}=r_{k}\pm \mathrm{i} \left | \chi_{k}(S) \right |.
$$
Substituting this into \eqref{aPM}, we get
\begin{equation}\label{bapm1}
a_{k}^{\pm}=\mp  \mathrm{i}  \frac{\chi_{k}(S)}{\left | \chi_{k}(S) \right |}b_{k}^{\pm}.
\end{equation}
Substituting \eqref{bapm1} into \eqref{EAB01}, we have
\begin{align}\label{Epmi}
E_{\lambda_{k}}^{\pm}\mathbf e_{u}
&=\pm \mathrm{i}\frac{\overline{\chi_{k}(S)}}{\left | \chi_{k}(S) \right |}\chi_{k}(g^{-1}h)E_{\lambda_{k}}^{\pm}\mathbf e_{v}.
\end{align}
Note that $u$ and $v$ are strongly cospectral. For any $k_1,k_2\in G$ and $\sigma,\tau\in\{+,-\}$, if $\lambda_{k_1}^{\sigma}=\lambda_{k_2}^{\tau}$, then by \eqref{laml=lam2} and \eqref{Epmi}, we get
$$
\sigma \frac{\overline{\chi_{k_1}(S)}}{|\chi_{k_1}(S)|}\chi_{k_1}(g^{-1}h)
=\tau \frac{\overline{\chi_{k_2}(S)}}{|\chi_{k_2}(S)|}\chi_{k_2}(g^{-1}h).
$$
Thus, (c) holds.

Next, we prove the sufficiency. From (a), we know that for any $k\in G$, $\chi_{k}(S)\neq 0$. By Lemma \ref{Specsemi-Cayley} and (b), for any $k\in G$, $\sigma\in\{+,-\}$, we get \eqref{Epmi}.
Note that for any $k\in G$,
$$
\left|\pm \mathrm{i}  \frac{\overline{ \chi_{k}(S)}}{\left | \chi_{k}(S) \right |}\chi_{k}(g^{-1}h)\right|=1.
$$
Then by \eqref{Epmi}, for each $\lambda_{k}^\sigma$, there exists a phase factor $\exp{\left(\mathrm{i}\pi q_{k}^{\sigma}(u,v)\right)}$ such that
\begin{align}\label{Exp=pmchi}
\exp{\left(\mathrm{i}\pi q_{k}^{\sigma}(u,v)\right)} =\sigma \mathrm{i}  \frac{\overline{ \chi_{k}(S)}}{\left | \chi_{k}(S) \right |}\chi_{k}(g^{-1}h).
\end{align}
Hence,
$$
E_{\lambda_k}^\sigma\mathbf e_u = \exp\bigl(\mathrm{i}\pi q_k^\sigma(u,v)\bigr)\,E_{\lambda_k}^\sigma\mathbf e_v .
$$
If $\lambda_{k_1}^\sigma = \lambda_{k_2}^\tau$, by (c) and \eqref{Exp=pmchi}, we have
$$
\exp{\left(\mathrm{i}\pi q_{k_1}^{\sigma}(u,v)\right)}=\exp{\left(\mathrm{i}\pi q_{k_2}^{\tau}(u,v)\right)}.
$$
Thus, $(g,0)$ and $(h,1)$ are strongly cospectral in $\Gamma$.

This completes the proof.
\qed
\end{proof}

\begin{theorem}\label{SemiAB}
Let $\Gamma=\mathrm{OSC}(G,R,L,S)$ be an oriented semi-Cayley graph over an abelian group of order $n$. Suppose that $u=(g,0)$ and $v=(h,1)$ are strongly cospectral in $\Gamma$. Then $\Gamma$ admits $(\alpha,\beta)$-FR with $\alpha,\beta\neq0$ from $u=(g,0)$ to $v=(h,1)$ at time $t$ if and only if all the following conditions hold:
\begin{itemize}
\item[\rm(a)] $G$ can be written as $K_1\cup K_2$, where
 \begin{align*}
  K_1&=\left\{k_1 : \overline{\chi_{k_1}(S)}\chi_{k_1}(g^{-1}h)=+\left | \chi_{k_1}(S) \right |\right\},\\
K_2&=\left\{k_2 : \overline{\chi_{k_2}(S)}\chi_{k_2}(g^{-1}h)=-\left | \chi_{k_2}(S) \right |\right\};
 \end{align*}
\item[\rm(b)] For every $k_1\in K_1$ and every $k_2\in K_2$, respectively, we have
$$
(\lambda_{k_1}^{+}-\lambda_{0}^{+})t\in 2\pi\mathrm{i}\mathbb{Z},\qquad   (\lambda_{k_2}^{-}-\lambda_{0}^{+})t\in 2\pi\mathrm{i}\mathbb{Z}.
$$
Moreover,
$$
\alpha=\mathrm{Re}\left(\exp(t\lambda_{0}^{+})\right), ~~\beta=-\mathrm{Im} \left(\exp(t\lambda_{0}^{+})\right).
$$
\end{itemize}
\end{theorem}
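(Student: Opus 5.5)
The plan is to reduce everything to Theorem~\ref{FRNS} and Corollary~\ref{FRNS2}, using the explicit phase factors computed in Theorem~\ref{SemiABcospectral}. Since $u=(g,0)$ and $v=(h,1)$ are strongly cospectral, Theorem~\ref{SemiABcospectral} gives $X=\emptyset$ and $r_k=l_k$ for all $k$. Hence $\lambda_k^{\pm}=r_k\pm\mathrm{i}|\chi_k(S)|$, and by \eqref{Epmi} the phase factor attached to $\lambda_k^{\sigma}$ is
$$
\exp\bigl(\mathrm{i}\pi q_k^{\sigma}(u,v)\bigr)=\sigma\,\mathrm{i}\,\frac{\overline{\chi_k(S)}}{|\chi_k(S)|}\chi_k(g^{-1}h).
$$
Because $\chi_k(S)\neq0$, every $E_{\lambda_k}^{\pm}\mathbf e_u$ is nonzero, so $\phi_u$ consists of all $\lambda_k^{\pm}$.

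For necessity, assume the FR exists. Theorem~\ref{FRNS} forces each phase factor to be $\pm\mathrm{i}$. This means $\overline{\chi_k(S)}\chi_k(g^{-1}h)=\pm|\chi_k(S)|$ for every $k$, which is exactly the statement $G=K_1\cup K_2$ in (a). We can also read off the sets of Corollary~\ref{FRNS2}:
\begin{itemize}
\item $\Lambda^+_{u,v}=\{\lambda_{k_1}^+ : k_1\in K_1\}\cup\{\lambda_{k_2}^- : k_2\in K_2\}$;
\item $\Lambda^-_{u,v}$ consists of the remaining eigenvalues.
\end{itemize}
The trivial character gives $\chi_0(S)=|S|>0$ and $\chi_0(g^{-1}h)=1$, so $0\in K_1$ and $\lambda_0^+\in\Lambda^+_{u,v}$. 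We therefore fix $\lambda=\lambda_0^+$ in Corollary~\ref{FRNS2}. It yields $\alpha=\mathrm{Re}(e^{t\lambda_0^+})$ and $\beta=-\mathrm{Im}(e^{t\lambda_0^+})$, together with $(\lambda_r-\lambda_0^+)t\in2\pi\mathrm{i}\mathbb Z$ for all $\lambda_r\in\Lambda^+_{u,v}$. This is precisely condition (b).

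For sufficiency, (a) shows that $\phi_u=\Lambda^+_{u,v}\cup\Lambda^-_{u,v}$ with the same description as above. Condition (b) is then literally the hypothesis of Corollary~\ref{FRNS2} with $\lambda=\lambda_0^+\in\Lambda^+_{u,v}$, so FR follows.

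The one point needing care is the requirement $\alpha,\beta\neq0$. In the converse we take $\alpha,\beta$ as given with $\alpha\beta\neq0$, so Corollary~\ref{FRNS2} applies directly. A second subtlety is that eigenvalues coinciding across different $k$ must be assigned consistently to $\Lambda^{\pm}$. This is guaranteed by strong cospectrality, specifically condition (c) of Theorem~\ref{SemiABcospectral}, which ensures the partition is well defined on eigenvalues rather than on indices.

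The main obstacle I expect is the bookkeeping that matches $\Lambda^{\pm}_{u,v}$ with $K_1,K_2$ and the signs $\sigma$. Concretely, we need $\sigma\,\mathrm{i}\cdot(\pm1)=\mathrm{i}$ exactly when the pair is $(+,K_1)$ or $(-,K_2)$, and all of (b) hinges on getting this sign right.
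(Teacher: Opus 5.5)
Your proposal is correct and follows the paper's proof. For necessity, the paper likewise uses Theorem~\ref{FRNS} to force the phase factors to be $\pm\mathrm{i}$, identifies $\Lambda^{\pm}_{u,v}$ with $K_1,K_2$ via \eqref{k1k2pm}, notes $0\in K_1$, and applies Corollary~\ref{FRNS2} with $\lambda=\lambda_0^+$. For sufficiency, the paper expands $U(t)\mathbf e_u$ explicitly using Theorem~\ref{Epm}, whereas you cite Corollary~\ref{FRNS2}; that expansion is exactly the converse of Corollary~\ref{FRNS2}, so the two arguments are the same.
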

\begin{proof}
We first prove the necessity. Assume that $\Gamma$ has $(\alpha,\beta )$-FR with $\alpha,\beta\neq0$ from $u=(g,0)$ to $v=(h,1)$ at time $t$. Notice that $u$ and $v$ are strongly cospectral. It follows from Theorem \ref{FRNS} and \eqref{EAB01} that
\begin{equation}\label{abchi}
\overline{ \left(a_{k}^{\pm}/b_{k}^{\pm}\right) } \chi_{k}(g^{-1}h) \in \{\pm \mathrm{i}\}.
\end{equation}
Combining \eqref{bapm1} with \eqref{abchi},  we obtain
$$
\overline{\chi_{k}(S)}\chi_{k}(g^{-1}h)=\pm \left | \chi_{k}(S) \right |.
$$
Since $u$ and $v$ are strongly cospectral, Theorem \ref{SemiABcospectral} implies that $\chi_{k}(S)\neq 0$ for every $k\in G$. Thus, $G$ can be partitioned into $K_1$ and $K_2$. Hence (a) follows.

Substituting \eqref{bapm1} into \eqref{EAB01}, for any $ k_1\in K_1$, we have
\begin{align*}
E_{\lambda_{k_1}}^{\pm}\mathbf e_{u}
&=\overline{ \left(a_{k_1}^{\pm}/b_{k_1}^{\pm}\right) } \chi_{k_1}(g^{-1}h) E_{\lambda_{k_1}}^{\pm}\mathbf e_{v}\\
&=\overline{\left(\mp  \mathrm{i}  \frac{\chi_{k_1}(S)}{\left | \chi_{k_1}(S) \right |}\right)}\chi_{k_1}(g^{-1}h)E_{\lambda_{k_1}}^{\pm}\mathbf e_{v}\\
&=\pm  \mathrm{i}\frac{\overline{\chi_{k_1}(S)}}{\left | \chi_{k_1}(S) \right |}\chi_{k_1}(g^{-1}h)E_{\lambda_{k_1}}^{\pm}\mathbf e_{v}\\
&=\pm  \mathrm{i}E_{\lambda_{k_1}}^{\pm}\mathbf e_{v}.
\end{align*}
Similarly, for any $ k_2\in K_2$, we have
\begin{align*}
E_{\lambda_{k_2}}^{\pm}\mathbf e_{u} &=\mp  \mathrm{i}E_{\lambda_{k_2}}^{\pm}\mathbf e_{v}.
\end{align*}
Thus, we have
\begin{equation}\label{k1k2pm}
\begin{aligned}
\Lambda_{u,v}^+ &= \{\lambda_{k_1}^{+}\mid k_1\in K_1\}\cup \{\lambda_{k_2}^{-}\mid k_2\in K_2\},\\
\Lambda_{u,v}^- &= \{\lambda_{k_1}^{-}\mid k_1\in K_1\}\cup \{\lambda_{k_2}^{+}\mid k_2\in K_2\}.
\end{aligned}
\end{equation}
Note that
$$
\overline{\chi_0(S)}\chi_0(g^{-1}h)=|S|=\chi_{0}(S).
$$
Hence $0\in K_1$, and therefore, by \eqref{k1k2pm}, $\lambda_{0}^{+}\in \Lambda_{u,v}^+$. Denote the corresponding phase factor by $\exp{\left(\mathrm{i}\pi q_{0}^{+}(u,v)\right)}=\mathrm{i}$.
By Corollary \ref{FRNS2} and \eqref{k1k2pm}, for every $k_1\in K_1$, $k_2\in K_2$,  we have
$$
(\lambda_{k_1}^{+}-\lambda_{0}^{+})t\in 2\pi\mathrm{i}\mathbb{Z},\qquad   (\lambda_{k_2}^{-}-\lambda_{0}^{+})t\in 2\pi\mathrm{i}\mathbb{Z}.
$$
By Theorem \ref{FRNS}, we have
$$
\alpha=\mathrm{Re}\left(\exp(t\lambda_{0}^{+})\right),
$$
and
$$
\beta
= -\frac{\mathrm{Im}\left(\exp{\left(t\lambda_{0}^{+}\right) }\right)}{\exp{\left(\mathrm{i}\pi q_{0}^{+}(u,v)\right)}} \mathrm{i}
=-\frac{\mathrm{Im}\left(\exp{\left(t\lambda_{0}^{+}\right)}\right)}{\mathrm{i}} \mathrm{i}
=-\mathrm{Im}\left(\exp{\left(t\lambda_{0}^{+}\right)}\right).
$$
Thus, (b) holds.

Next, we show the sufficiency. Since $u$ and $v$ are strongly cospectral, by Theorem \ref{SemiABcospectral} we have \eqref{Epmi}. Condition (a) gives the partition $G=K_1\cup K_2$. By \eqref{Epmi}, we get \eqref{k1k2pm}. Since $\Gamma$ is an oriented graph, by Theorem \ref{Epm}, we have
\begin{equation}\label{Lam+-}
\Lambda_{u,v}^-=-\Lambda_{u,v}^+.
\end{equation}
By (b), \eqref{k1k2pm} and \eqref{Lam+-}, for any $k_1\in K_1, k_2\in K_2$, we have
\begin{equation}\label{k1k2}
\exp(t\lambda_{k_1}^{+})=\exp(t\lambda_{k_2}^{-})=\exp(-t\lambda_{k_1}^{-})=\exp(-t\lambda_{k_2}^{+}).
\end{equation}

Set $\exp(t\lambda_{k_1}^{+})=\exp(t\lambda_{k_2}^{-})=\alpha-\mathrm{i}\beta$ with $\alpha,\beta\neq0$. Then $\exp(t\lambda_{k_1}^{-})=\exp(t\lambda_{k_2}^{+})=\alpha+\mathrm{i}\beta$. Note that the sum of the eigenprojectors of $\Gamma$ equals the identity matrix. By \eqref{USEMI}, \eqref{k1k2pm} and \eqref{k1k2}, we get
\begin{equation}\label{DCC1}
\begin{aligned}
U(t)\mathbf{e}_{u}
=&\sum_{k\in G}\sum_{\pm}\exp(\lambda_{k}^{\pm}t)E_{\lambda_{k}}^{\pm}\mathbf{e}_{u}\\
=&\sum_{k_1\in K_1}\exp(\lambda_{k_1}^{+}t)E_{\lambda_{k_1}}^{+}\mathbf{e}_{u}+\sum_{k_2\in K_2}\exp(\lambda_{k_2}^{-}t)E_{\lambda_{k_2}}^{-}\mathbf{e}_{u}\\
&+\sum_{k_1\in K_1}\exp(\lambda_{k_1}^{-}t)E_{\lambda_{k_1}}^{-}\mathbf{e}_{u}+\sum_{k_2\in K_2}\exp(\lambda_{k_2}^{+}t)E_{\lambda_{k_2}}^{+}\mathbf{e}_{u}\\
=&\sum_{k_1\in K_1}(\alpha-\mathrm{i}\beta)E_{\lambda_{k_1}}^{+}\mathbf{e}_{u}+\sum_{k_2\in K_2}(\alpha-\mathrm{i}\beta)E_{\lambda_{k_2}}^{-}\mathbf{e}_{u}\\
&+\sum_{k_1\in K_1}(\alpha+\mathrm{i}\beta)E_{\lambda_{k_1}}^{-}\mathbf{e}_{u}+\sum_{k_2\in K_2}(\alpha+\mathrm{i}\beta)E_{\lambda_{k_2}}^{+}\mathbf{e}_{u}\\
=&\alpha \sum_{k_1\in K_1} E_{\lambda_{k_1}}^{+}\mathbf{e}_{u}+\beta \sum_{k_1\in K_1} E_{\lambda_{k_1}}^{+}\mathbf{e}_{v}
+\alpha \sum_{k_2\in K_2} E_{\lambda_{k_2}}^{-}\mathbf{e}_{u}+\beta \sum_{k_2\in K_2} E_{\lambda_{k_2}}^{-}\mathbf{e}_{v}\\
&+\alpha \sum_{k_1\in K_1} E_{\lambda_{k_1}}^{-}\mathbf{e}_{u}+\beta \sum_{k_1\in K_1} E_{\lambda_{k_1}}^{-}\mathbf{e}_{v}
+\alpha \sum_{k_2\in K_2} E_{\lambda_{k_2}}^{+}\mathbf{e}_{u}+\beta \sum_{k_2\in K_2} E_{\lambda_{k_2}}^{+}\mathbf{e}_{v}\\
=& \alpha\sum_{k\in G}\sum_{\pm}E_{\lambda_k}^{\pm}\mathbf e_u +\beta\sum_{k\in G}\sum_{\pm}E_{\lambda_k}^{\pm}\mathbf{e}_{v} \\
=&\alpha \mathbf{e}_{u}+\beta \mathbf{e}_{v}.
\end{aligned}
\end{equation}
Thus, $\Gamma$ admits $(\alpha,\beta)$-FR from $u$ to $v$ at time $t$.

This completes the proof.
\qed
\end{proof}

\begin{cor}\label{properFR}
Let $\mathbb Z_n$ be the cyclic group of order $n \ge5 $. Set $R=L=\emptyset$ and $S=\mathbb Z_n\setminus\{0\}$. Then $\Gamma=\mathrm{OSC}(\mathbb Z_n,\emptyset,\emptyset,S)$ admits $\left(\cos\frac{2\pi}{n},\sin\frac{2\pi}{n}\right)\text{-FR}$ from $(u,0)$ to $(u,1)$ at time $t=\frac{2\pi}{n}$.
\end{cor}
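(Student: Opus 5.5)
The plan is to apply Theorem \ref{SemiAB} with $g=h=u$. This requires first checking strong cospectrality of $(u,0)$ and $(u,1)$ through Theorem \ref{SemiABcospectral}, and then verifying conditions (a) and (b) of Theorem \ref{SemiAB} with $t=\frac{2\pi}{n}$.

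\emph{Step 1: the spectral data.} Since $R=L=\emptyset$, we have $r_k=l_k=0$ for all $k\in\mathbb Z_n$. Since $S=\mathbb Z_n\setminus\{0\}$, relation \eqref{CHIsum} gives $\chi_0(S)=n-1$ and $\chi_k(S)=-1$ for $k\neq0$. By Lemma \ref{Specsemi-Cayley}(a) the eigenvalues are therefore
$$\lambda_0^{\pm}=\pm\mathrm{i}(n-1),\qquad \lambda_k^{\pm}=\pm\mathrm{i}\quad (k\neq 0).$$
Also $g^{-1}h=0$, so $\chi_k(g^{-1}h)=1$ for every $k$.

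\emph{Step 2: strong cospectrality via Theorem \ref{SemiABcospectral}.} Condition (a) holds because $\chi_k(S)\neq0$ for all $k$, so $X=\emptyset$. Condition (b) holds because $r_k=l_k=0$. For condition (c), we must list every coincidence $\lambda_{k_1}^\sigma=\lambda_{k_2}^\tau$. Since $n-1\neq1$, no eigenvalue $\lambda_0^{\pm}$ equals any $\pm\mathrm{i}$, and $\lambda_0^+\ne\lambda_0^-$. Hence the only coincidences occur for $k_1,k_2\neq0$ with $\sigma=\tau$. In that case both sides of (c) equal $\sigma\cdot(-1)\cdot 1$, so (c) holds. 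This case bookkeeping is the only place where a little care is needed.

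\emph{Step 3: the conditions of Theorem \ref{SemiAB}.} For $k=0$ we get $\overline{\chi_0(S)}\chi_0(0)=n-1=|\chi_0(S)|$, so $0\in K_1$. For $k\neq0$ we get $\overline{\chi_k(S)}=-1=-|\chi_k(S)|$, so $k\in K_2$. Thus $K_1=\{0\}$, $K_2=\mathbb Z_n\setminus\{0\}$, and condition (a) holds.

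For condition (b), the requirement for $k_1=0$ is trivial. For $k_2\neq0$, with $t=\frac{2\pi}{n}$,
$$(\lambda_{k_2}^- -\lambda_0^+)t=(-\mathrm{i}-\mathrm{i}(n-1))\tfrac{2\pi}{n}=-2\pi\mathrm{i}\in2\pi\mathrm{i}\mathbb Z.$$
The parameters then come out as
$$\alpha=\mathrm{Re}\bigl(e^{\mathrm{i}(n-1)2\pi/n}\bigr)=\cos\tfrac{2\pi}{n},\qquad \beta=-\mathrm{Im}\bigl(e^{\mathrm{i}(n-1)2\pi/n}\bigr)=\sin\tfrac{2\pi}{n}.$$

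\emph{Step 4: properness.} Finally we confirm $\alpha\beta\neq0$. For $n\ge5$ we have $0<\frac{2\pi}{n}<\frac{\pi}{2}$, so both $\cos\frac{2\pi}{n}$ and $\sin\frac{2\pi}{n}$ are nonzero. Theorem \ref{SemiAB} then yields the claimed proper FR from $(u,0)$ to $(u,1)$ at time $t=\frac{2\pi}{n}$.
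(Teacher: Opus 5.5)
Your proposal is correct and follows essentially the same route as the paper. Both compute $\chi_k(S)$ and the eigenvalues, establish strong cospectrality via Theorem~\ref{SemiABcospectral}, and then check conditions (a) and (b) of Theorem~\ref{SemiAB} with $K_1=\{0\}$, $K_2=\mathbb Z_n\setminus\{0\}$ and $t=\frac{2\pi}{n}$. The only cosmetic difference is in condition (c) of Theorem~\ref{SemiABcospectral}: you enumerate the eigenvalue coincidences directly, whereas the paper lists the sets $\Lambda^{\pm}_{(u,0),(u,1)}$ and observes that they are disjoint.
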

\begin{proof}
Let $\omega_{n}=\exp\left(\frac{2\pi\mathrm{i}}{n}\right)$ and write the characters of $\mathbb Z_n$ as
$$
\chi_k(x)=\omega_{n}^{kx}, ~~k,x\in \mathbb Z_n.
$$
Then we have
\begin{equation}\label{ExamproperFR}
 \chi_k(S)
=
\begin{cases}
n-1, & \text{if~}k=0,\\[0.2cm]
-1, & \text{if~} k\in \mathbb Z_n\setminus\{0\}.
\end{cases}
\end{equation}
It follows that
$$
X=\left\{k\in \mathbb Z_n:\chi_k(S)=0\right\}=\emptyset.
$$
Thus Theorem \ref{SemiABcospectral} (a) is satisfied.

Moreover, since $R=L=\emptyset$, we have $r_k=\ell_k=0$ for every $k\in \mathbb Z_n$. Then Theorem \ref{SemiABcospectral} (b) holds.

By Lemma \ref{Specsemi-Cayley}, we get \eqref{Epmi} and the eigenvalues
\begin{equation}\label{Exameigen}
\lambda_0^\pm=\pm (n-1)\mathrm{i},~~~ \lambda_k^\pm=\pm\mathrm{i}, ~~k\in \mathbb Z_n\setminus\{0\}.
\end{equation}
By \eqref{Epmi} and \eqref{ExamproperFR}, we have
\begin{equation}\label{Exampart1}
E_{\lambda_k}^\pm\mathbf e_{(u,0)} =
\begin{cases}
\pm\mathrm i\,E_{\lambda_k}^\pm\mathbf e_{(u,1)}, & k=0,\\[0.2cm]
\mp\mathrm i\,E_{\lambda_k}^\pm\mathbf e_{(u,1)}, & k\in \mathbb Z_n\setminus\{0\}.
\end{cases}
\end{equation}
Accordingly, we set
$$
K_1=\{0\} \text{~and~} K_2=\mathbb Z_n\setminus\{0\}.
$$
Then by \eqref{Exameigen} and \eqref{Exampart1}, we have
$$
\Lambda_{(u,0),(u,1)}^+ = \{\mathrm{i}(n-1),~-\mathrm{i}\},\qquad
\Lambda_{(u,0),(u,1)}^- =\{-\mathrm{i}(n-1),~\mathrm{i}\}.
$$
Note that $\Lambda_{(u,0),(u,1)}^+\cap \Lambda_{(u,0),(u,1)}^-=\emptyset$ since $n\ge5$. Thus, any two equal eigenvalues necessarily belong to the same set and thus share the same phase factor. Thus, Theorem~\ref{SemiABcospectral} (c) is satisfied. By Theorem \ref{SemiABcospectral}, $(u,0)$ and $(u,1)$ are strongly cospectral.

We now verify the conditions of Theorem \ref{SemiAB}. Since $\chi_k(u-u)=\chi_k(0)=1$ for all $k$, it follows from \eqref{ExamproperFR} that
$$
\overline{\chi_k(S)}\chi_k(u^{-1}u)
=
\begin{cases}
|\chi_k(S)|, & \text{if~} k=0,\\[0.2cm]
-|\chi_k(S)|, & \text{if~}k\in \mathbb Z_n\setminus\{0\}.
\end{cases}
$$
Therefore, $\mathbb Z_{n}=K_1\cup K_2$. This verifies Theorem \ref{SemiAB} $\rm(a)$.

Taking $t=\frac{2\pi}{n}$, for every $k_1\in K_1$ and $k_2\in K_2$, we obtain
$$
(\lambda_{k_1}^+-\lambda_0^+)t=0, \qquad(\lambda_{k_2}^{-}-\lambda_{0}^{+})t=(-\mathrm{i}-(n-1)\mathrm{i})\frac{2\pi}{n}\in 2\pi\mathrm{i}\mathbb Z.
$$
Finally,
$$
\exp(t\lambda_0^+)= \exp\left(\frac{2(n-1)\pi\mathrm{i}}{n}\right).
$$
Since $n\ge 5$, we get
$$
\mathrm{Re} \left(\exp(t\lambda_{0}^{+})\right)=\cos\frac{2\pi}{n}\neq 0, ~~\mathrm{Im} \left(\exp(t\lambda_{0}^{+})\right)=-\sin\frac{2\pi}{n}\neq 0.
$$
Thus, Theorem \ref{SemiAB} (b) holds.

Therefore, by Theorem~\ref{SemiAB}, $\Gamma=\mathrm{OSC}(\mathbb Z_n,\emptyset,\emptyset,S)$ admits $\left(\cos\frac{2\pi}{n},\sin\frac{2\pi}{n}\right)\text{-FR}$ from $(u,0)$ to $(u,1)$ at time $\frac{2\pi}{n}$.
\qed
\end{proof}

In Corollary \ref{properFR}, we show that $\Gamma=\mathrm{OSC}(\mathbb Z_n,\emptyset,\emptyset, S)$ with $S=\mathbb Z_n\setminus\{0\}$ exhibits $\bigl(\cos\frac{2\pi}{n},\sin\frac{2\pi}{n}\bigr)\textnormal{-FR}$ from $(u,0)$ to $(u,1)$ at time $\frac{2\pi}{n}$. In fact, for $R=L\neq\emptyset$, $\Gamma=\mathrm{OSC}(\mathbb Z_n,R,R,S)$ can also admit FR between $(u,0)$ and $(u,1)$ subject to appropriate conditions; see the following example.

\begin{example}\label{example-proper-FR-nonempty}
{\em Let $R=L=\{1,5,9\}$ and $S=\{4,8\}$. Then, for every $u\in\mathbb Z_{12}$, the oriented semi-Cayley graph $\Gamma=\operatorname{OSC}(\mathbb Z_{12},R,L,S)$ admits $\left(-\frac12,\frac{\sqrt3}{2}\right)\text{-FR}$ from $(u,0)$ to $(u,1)$ at time $t=\frac{2\pi}{3}$.
}
\end{example}

\begin{proof}
Let $\omega=\exp\left(\frac{2\pi\mathrm{i}}{12}\right)$. For $R=L=\{1,5,9\}$, we have
$$
\chi_k(R)=\chi_k(L)
 =\omega^k+\omega^{5k}+\omega^{9k}
 =\omega^k\left(1+\omega^{4k}+\omega^{8k}\right).
$$
Since $\omega^4$ is a primitive third root of unity,
$$
1+\omega^{4k}+\omega^{8k}
=
\begin{cases}
3, & \text{if~}3\mid k,\\[0.2cm]
0, & \text{if~}3\nmid k.
\end{cases}
$$
It follows that
$$
r_k=l_k=
\begin{cases}
6\mathrm{i}, & \text{if~}k=3,\\
-6\mathrm{i}, & \text{if~}k=9,\\
0, & \text{otherwise}.
\end{cases}
$$
Thus, Theorem \ref{SemiABcospectral} $\rm(b)$ is satisfied.

Moreover,
\begin{equation}\label{Exam:111}
 \chi_k(S)=\omega^{4k}+\omega^{8k}
=
\begin{cases}
2, & \text{if~}3\mid k,\\[0.2cm]
-1, & \text{if~}3\nmid k.
\end{cases}
\end{equation}
Thus, $\chi_k(S)\neq0$ for every \(k\in\mathbb Z_{12}\). Hence, $X=\emptyset$, and Theorem \ref{SemiABcospectral} $\rm(a)$ is satisfied.

By Lemma~\ref{Specsemi-Cayley}, we obtain \eqref{Epmi}. Note that $ \chi_k(u^{-1}u)=\chi_k(0)=1$ for every $k$. Then by \eqref{Epmi} and \eqref{Exam:111}, we get
\begin{equation}\label{Exampart2}
E_{\lambda_k}^\pm\mathbf e_{(u,0)} =
\begin{cases}
\pm\mathrm i\,E_{\lambda_k}^\pm\mathbf e_{(u,1)}, & \text{if~}3\mid k,\\[0.2cm]
\mp\mathrm i\,E_{\lambda_k}^\pm\mathbf e_{(u,1)}, & \text{if~}3\nmid k.
\end{cases}
\end{equation}
Accordingly, we set
$$
K_1=\{0,3,6,9\},\qquad K_2=\{1,2,4,5,7,8,10,11\}.
$$
Then by \eqref{lam+-} and \eqref{Exampart2}, we have
$$
\Lambda_{(u,0),(u,1)}^+ = \{2\mathrm i,\,8\mathrm i,\,-4\mathrm i,\,-\mathrm i\},\qquad
\Lambda_{(u,0),(u,1)}^- = \{-2\mathrm i,\,-8\mathrm i,\,4\mathrm i,\,\mathrm i\}.
$$
Notice that $\Lambda_{(u,0),(u,1)}^+\cap\Lambda_{(u,0),(u,1)}^-=\emptyset$. Thus Theorem~\ref{SemiABcospectral} (c) is satisfied. By Theorem \ref{SemiABcospectral}, $(u,0)$ and $(u,1)$ are strongly cospectral.

Recall that $ \chi_k(u^{-1}u)=\chi_k(0)=1$ for every $k$. By \eqref{Exam:111}, we obtain
$$
\overline{\chi_k(S)}\chi_k(u^{-1}u)
=
\begin{cases}
|\chi_k(S)|, & \text{if~}3\mid k,\\[0.2cm]
-|\chi_k(S)|, & \text{if~}3\nmid k.
\end{cases}
$$
Therefore, $\mathbb Z_{12}=K_1\cup K_2$. Theorem \ref{SemiAB} $\rm(a)$ holds.

By \eqref{lam+-}, we have
$$
\{\lambda_{k_1}^{+}\mid k_1\in K_1\}=\{2\mathrm{i},8\mathrm{i},-4\mathrm{i}\},~~~ \{\lambda_{k_2}^{-}\mid k_2\in K_2\}=\{-\mathrm{i}\},
$$
where $\lambda_{0}^{+}=2\mathrm{i}$. Taking $t=\frac{2\pi}{3}$, for every $k_1\in K_1$, $k_2\in K_2$, we have
$$
(\lambda_{k_1}^{+}-\lambda_{0}^{+})t\in 2\pi\mathrm{i}\mathbb{Z},\qquad   (\lambda_{k_2}^{-}-\lambda_{0}^{+})t\in 2\pi\mathrm{i}\mathbb{Z}.
$$
Choosing $k_1=0$, we have $\lambda_0^+=2\mathrm{i}$, and hence
$$
\exp(t\lambda_0^+)= \exp\left(\frac{4\pi\mathrm{i}}{3}\right)=-\frac12-\frac{\sqrt3}{2}\mathrm{i}.
$$
Thus, Theorem \ref{SemiAB} $\rm(b)$ is satisfied.

It follows from Theorem~\ref{SemiAB} that
$$
\alpha=-\frac12, ~~\beta=\frac{\sqrt3}{2}.
$$
Therefore, $\Gamma$ admits $\left(-\frac{1}{2},\frac{\sqrt3}{2}\right)\text{-FR}$ from $(u,0)$ to $(u,1)$ at time $\frac{2\pi}{3}$. \qed
\end{proof}

\section{Conclusion}
In this paper, we investigate the existence of proper FR on oriented graphs. We first establish necessary and sufficient conditions for oriented graphs to admit proper FR between strongly cospectral vertices. We then prove that oriented Cayley graphs over abelian groups do not admit proper FR. We further study the existence of proper FR in semi-Cayley graphs over abelian groups; such Cayley graphs over groups containing an index-two subgroup form a natural subclass of semi-Cayley graphs over that subgroup. We conclude by proposing three problems for future study:

\begin{itemize}
\item[\rm(a)] Study the existence of FR on oriented Cayley graphs over nonabelian groups.

\item[\rm(b)] Give a characterization of oriented graphs that admit FR between non-cospectral vertices.

\item[\rm(c)] Can we construct oriented graphs that admit FR between any two vertices?
\end{itemize}

\section*{Statements and Declarations}

\noindent \textbf{Competing interests}~~No potential competing interest was reported by the authors.

\medskip

\noindent \textbf{Data availability statements}~~Data sharing not applicable to this article as no datasets were generated or analysed during the current study.

\end{document}